\newtheorem{theorem}{Theorem}
\newtheorem{lemma}{Lemma}
\renewcommand\algorithmicthen{}
\renewcommand\algorithmicdo{}
\newcommand{\R}{\mathbb{R}}
\newcommand{\bra}[1]{\left( #1 \right) }
\newcommand{\set}[1]{\left\lbrace #1 \right\rbrace}
\newcommand{\norm}[1]{\lVert #1 \rVert}
\newcommand{\abs}[1]{\left\lvert #1 \right\rvert}
\newcommand{\floor}[1]{\left\lfloor #1 \right\rfloor}
\def\defeq{\mathrel{\mathop:}=} 
\renewcommand{\b}[1]{\mathbf{#1}}
\newcommand{\eps}{\varepsilon}
\newcommand{\diag}{\text{diag}}
\DeclareMathOperator{\argmin}{argmin}
\DeclareMathOperator{\argmax}{argmax}
\newcommand{\uargmin}[1]{\underset{#1}{\argmin}\;}
\newcommand{\uargmax}[1]{\underset{#1}{\argmax}\;}
\newcommand{\umin}[1]{\underset{#1}{\min}\;}
\newcommand{\umax}[1]{\underset{#1}{\max}\;}
\newcommand{\usup}[1]{\underset{#1}{\sup}\;}
\newcommand{\uinf}[1]{\underset{#1}{\inf}\;}
\newcommand{\revision}[1]{\textcolor{black}{#1}}
\newcommand{\sRevision}[1]{\textcolor{black}{#1}}
\newcommand{\T}{\mathcal{T}}
\newcommand{\tU}{\tilde{U}}
\newcommand{\tV}{\tilde{V}}
\newcommand{\tW}{\tilde{W}}
\newcommand{\tI}{\tilde{I}}
\newcommand{\tJ}{\tilde{J}}
\newcommand{\tK}{\tilde{K}}
\renewcommand{\abs}[1]{|#1|}
\title{Functional Tucker approximation using Chebyshev interpolation}
\author[1]{Sergey Dolgov}
\author[2]{Daniel Kressner}
\author[3]{Christoph Strössner}
\affil[1]{Department of Mathematical Sciences, University of Bath, United Kingdom, \texttt{S.Dolgov@bath.ac.uk}}
\affil[2]{Institute   of   Mathematics, EPF  Lausanne,  Switzerland,  \texttt{daniel.kressner@epfl.ch}.}
\affil[3]{Institute   of   Mathematics, EPF  Lausanne,  Switzerland,  \texttt{christoph.stroessner@epfl.ch}.}
\date{March 3, 2021}
\begin{document}
\maketitle

\begin{abstract} 
This work is concerned with approximating a trivariate function defined on a tensor-product domain via function evaluations. Combining tensorized Chebyshev interpolation with a Tucker decomposition of low multilinear rank yields function approximations that can be computed and stored very efficiently.
The existing Chebfun3 algorithm [Hashemi and Trefethen, \emph{SIAM J. Sci. Comput.}, 39 (2017)]
uses a similar format but the construction of the approximation proceeds indirectly, via a so called slice-Tucker decomposition. As a consequence, Chebfun3 sometimes uses unnecessarily many function evaluation\revision{s} and does not fully benefit from the potential of the Tucker decomposition to reduce, sometimes dramatically, the computational cost. We propose a novel algorithm Chebfun3F that utilizes univariate fibers instead of bivariate slices to construct  the Tucker decomposition. Chebfun3F reduces the cost for the approximation \revision{in terms of the number of function evaluations} for nearly all functions considered, typically by 75\%, and sometimes by over 98\%.
\end{abstract}

\section{Introduction}

This work is  concerned with the approximation of trivariate functions (that is, functions depending on three variables) defined on a tensor-product domain, for the purpose of performing numerical computations with these functions. 
Standard approximation techniques, such as interpolation on a regular grid, may require an impractical amount of function evaluations. 
Several techniques have been proposed to reduce the number of evaluations for multivariate functions by exploiting additional properties. 
For example, sparse grid interpolation~\cite{Bungartz04} exploits mixed regularity. Alternatively, functional low-rank (tensor) decompositions, such as the spectral tensor train decomposition~\cite{Bigoni16}, the continuous low-rank decomposition~\cite{Gorodetsky17, Gorodetsky19}, and the QTT decomposition~\cite{Khoromskij2011,Oseledets2009i}, have been proposed.
In this work, we focus on using the Tucker decomposition for third-order tensors, following work by Hashemi and Trefethen~\cite{Hashemi17}.

The original problem of finding separable decompositions of functions is intimately connected to low-rank decompositions of matrices and tensors~\cite[Chapter 7]{Hackbusch12}.
A trivariate function is called separable if it can be represented as a product of univariate functions: $f(x,y,z) = u(x) v(y) w(z)$. 
If such a decomposition is available, it is usually much more efficient to work with the factors $u,v,w$ instead of $f$ when, e.g., discretizing the function. In practice, most functions are usually not separable, but they often can be well approximated by a sum of separable functions.
Additional structure can be imposed on this sum, corresponding to different tensor formats. 
In this work, we consider the approximation of a function $f: [-1,1]^3 \to \R$ in the functional Tucker format~\cite{Tucker66}, as in~\cite{Hashemi17, Luu17, Rai19}, which takes the form
\begin{equation} \label{eq:tuckerapprox}
f(x,y,z) \approx \sum_{i=1}^{r_1} \sum_{j=1}^{r_2} \sum_{k=1}^{r_3} \sRevision{\mathcal{C}}_{ijk} u_i(x) v_j(y) w_k(z),
\end{equation}
with univariate functions $u_i, v_j, w_k \colon [-1,1] \to \mathbb{R}$ and the so called \emph{core tensor} $\mathcal{C} \in \R^{r_1\times r_2\times r_3}$.
The minimal $r = (r_1,r_2,r_3)$, for which~\eqref{eq:tuckerapprox} can be satisfied with equality is called multilinear rank of $f$.
It determines the number of entries in $\mathcal{C}$ and the number of univariate functions needed to represent $f$.
For functions depending on more than three variables, a recursive format is usually preferable, leading to tree based formats~\cite{Falco18, Nouy17b, Nouy19} such as the hierarchical Tucker format~\cite{Hackbusch09, Schneider14} and the tensor train format \cite{Oseledets11,Bigoni16, Dektor19, Gorodetsky17, Gorodetsky19}.

The existence of a good approximation of the form~\eqref{eq:tuckerapprox} depends, in a nontrivial manner, on properties of $f$. 
It can be shown that the best approximation error in the format decays algebraically with respect to the multilinear rank of the approximation for functions in Sobolev \revision{spaces}~\cite{Griebel19, Schneider14} 
and geometrically for analytic functions~\cite{Hackbusch07, Trefethen17b}.
Approximations based on the Tucker format are highly anisotropic~\cite{Trefethen17c}, i.e., a rotation of the function may lead to a very different behavior of the approximation error. 
This can be partially overcome by adaptively subdividing the domain of the function as proposed, e.g., by Aiton and Driscoll~\cite{Aiton18}. 

The representation~\eqref{eq:tuckerapprox} is not yet practical because it involves continuous objects; a combination of low-rank tensor \emph{and} function approximation is needed.
Univariate functions can be approximated using barycentric Lagrange interpolation based on Chebyshev points \cite{Aurentz17, Berrut04, Higham04}.
This interpolation is fundamental to Chebfun~\cite{Driscoll14} - a package providing tools to perform numerical computations on the level of functions~\cite{Platte10}. 
Operations with these functions are internally performed by manipulating the Chebyshev coefficients of the interpolant~\cite{Battles04}. 

In Chebfun2~\cite{Townsend14,Townsend13}, a bivariate function $f(x,y)$ is approximated by applying Adaptive Cross Approximation (ACA) \cite{Bebendorf11}, which yields a low-rank approximation in terms of function fibers (e.g. $f(x_\star,y)$ for fixed $x_\star$ but varying $y$), and interpolating these fibers.
In Chebfun3, Hashemi and Trefethen~\cite{Hashemi17} extended these ideas to trivariate functions by recursively applying ACA, to first break down the tensor to function slices (e.g., $f(x_\star,y,z)$ for fixed $x_\star$) and then to function fibers.
As will be explained in Section~\ref{sec:Chebfun3Redundant}, this indirect approach via slice approximations typically leads to  redundant function fibers, which in turn involve unnecessary function evaluations. This is particularly problematic when the evaluation of the function is expensive, e.g. when each sample requires the solution of a partial differential equation (PDE); see Section~\ref{sec:NumericalResults} for an example. \revision{Whilst we only focus on function approximation throughout this work, the scope of Chebfun3 is wider, for instance it contains tools to perform numerical integration and differentiation.}

In this paper, we propose a novel algorithm aiming at computing the Tucker decomposition directly. Our algorithm is called Chebfun3F to emphasize that it is based on selecting the \emph{Fibers} $u_i,v_j,w_k$ in the Tucker approximation~\eqref{eq:tuckerapprox}. To compute a suitable core tensor, oblique projections based on Discrete Empirical Interpolation (DEIM)~\cite{Chaturantabut10} are used. We combine this approach with heuristics similar to the ones used in Chebfun3 for choosing the univariate discretization parameters adaptively and for the accuracy verification.

The remainder of this paper is structured as follows. 
In Section~\ref{sec:Theory}, we introduce and analyze the approximation format used in Chebfun3 and Chebfun3F.
In Section~\ref{sec:Chebfun3}, we briefly recall the approximation algorithm currently used in Chebfun3. Section~\ref{sec:Chebfun3F} introduces our novel algorithm Chebfun3F. Finally, in Section~\ref{sec:NumericalResults}, we perform numerical experiments to compare Chebfun3, Chebfun3F and sparse grid interpolation.

\section{Chebyshev Interpolation and Tucker Approximation} \label{sec:Theory}

\subsection{Chebyshev Interpolation}\label{sec:ChebInterp}
Given a function $f:[-1,1]^3\to \R$, we consider an  approximation of the form
\begin{equation}\label{eq:ChebyshevInterpolantForm}
    f(x,y,z) \approx \tilde{f}(x,y,z) = \sum_{i=1}^{n_1} \sum_{j=1}^{n_2} \sum_{k=1}^{n_3} \mathcal{A}_{ijk} T_i(x) T_j(y) T_k(z),
\end{equation}
where $\mathcal{A}\in \R^{n_1 \times n_2 \times n_3}$ is the coefficient tensor and $T_k(x) = \cos((k\sRevision{-1})\cos^{-1}(x))$ denotes the $k$-th Chebyshev polynomial.

To construct~\eqref{eq:ChebyshevInterpolantForm}, we use (tensorized) interpolation. 
Let $\mathcal{T}_{ijk}=f(x_i^{(1)},x_j^{(2)},x_k^{(3)})$ denote the tensor containing all function values on the grid of Chebyshev points $x_k^{({\sRevision{\ell}})} = \cos(\revision{(k-1)}\pi/(n_{{\sRevision{\ell}}}-1)),\ k = 1,\dots,n_{{\sRevision{\ell}}}\sRevision{,\ \ell=1,2,3}$~\cite{Trefethen13}.
The coefficient tensor $\mathcal{A}$ is computed \sRevision{uniquely} from $\mathcal{T}$ using Fourier transformations.
We define the transformation matrices $F^{({\sRevision{\ell}})}\in \R^{n_{{\sRevision{\ell}}}\times n_{{\sRevision{\ell}}}}$ for ${\sRevision{\ell}}=1,2,3$ as in~\cite[Sec. 8.3.2.]{Mason02}
\begin{equation*}
F^{({\sRevision{\ell}})} = \frac{2}{n_{{\sRevision{\ell}}}}
\begin{pmatrix}
\frac{1}{4} T_1(x_1^{({\sRevision{\ell}})}) & \frac{1}{2} T_1(x_2^{({\sRevision{\ell}})}) & \frac{1}{2} T_1(x_3^{({\sRevision{\ell}})}) & \dots & \frac{1}{4} T_{1}(x_{n_{{\sRevision{\ell}}} }^{({\sRevision{\ell}})}) \\
\frac{1}{2} T_2(x_1^{({\sRevision{\ell}})}) & T_2(x_2^{({\sRevision{\ell}})}) &  T_2(x_3^{({\sRevision{\ell}})}) & \dots & \frac{1}{2} T_{2}(x_{n_{{\sRevision{\ell}}} }^{({\sRevision{\ell}})}) \\
\frac{1}{2} T_3(x_1^{({\sRevision{\ell}})}) &  T_3(x_2^{({\sRevision{\ell}})}) &  T_3(x_3^{({\sRevision{\ell}})}) & \dots & \frac{1}{2} T_{3}(x_{n_{{\sRevision{\ell}}} }^{({\sRevision{\ell}})})
\\ \vdots & \vdots & \vdots & \ddots & \vdots \\
\frac{1}{4} T_{n_{{\sRevision{\ell}}}}(x_1^{({\sRevision{\ell}})}) & \frac{1}{2} T_{n_{{\sRevision{\ell}}}}(x_2^{({\sRevision{\ell}})}) & \frac{1}{2} T_{n_{{\sRevision{\ell}}}}(x_3^{({\sRevision{\ell}})}) & \dots & \frac{1}{4} T_{n_{{\sRevision{\ell}}}}(x_{n_{{\sRevision{\ell}}} }^{({\sRevision{\ell}})})
\end{pmatrix}.
\end{equation*}
The mapping from the function evaluations to the coefficients can now be written as 
\begin{equation}\label{eq:MappingTtoA}
    \mathcal{A} = \mathcal{T} \times_1 F^{(1)} \times_2 F^{(2)} \times_3 F^{(3)},
\end{equation}
where $\mathcal{T} \times_{\sRevision{\ell}} M$ denotes the mode-${\sRevision{\ell}}$ multiplication. For a tensor $\mathcal{T} \in \R^{n_1 \times n_2 \times n_3}$ and a matrix $M \in \R^{m \times n_{\sRevision{\ell}}}$  it is defined as the multiplication of every mode-${\sRevision{\ell}}$ fiber of $\mathcal{T}$ with $M$, i.e.
\begin{equation*}
    \bra{\mathcal{T} \times_{\sRevision{\ell}} M}^{\set{{\sRevision{\ell}}}} = M \mathcal{T}^{\set{{\sRevision{\ell}}}},
\end{equation*}
where $\mathcal{T}^{\set{{\sRevision{\ell}}}}$ denotes the mode-${\sRevision{\ell}}$ matricization, which is the matrix containing all mode-${\sRevision{\ell}}$ fibers of $\mathcal{T}$~\cite{Kolda09}. By construction, the interpolation condition $\tilde{f}(x_i^{(1)},x_j^{(2)},x_k^{(3)}) = f(x_i^{(1)},x_j^{(2)},x_k^{(3)})$ is satisfied in all Chebyshev points.

The approximation error for Chebyshev interpolation applied to  multivariate analytic functions has been studied, e.g., by Sauter and Schwab in~\cite{Sauter11}. The following result states that the error decays exponentially with respect to the number of interpolation points in each variable.
\begin{theorem}[{\cite[Lemma 7.3.3.]{Sauter11}}]\label{thm:3DChebyshevInterpolationErrorAnalyticDecay}
Suppose that $f \in C([-1,1]^3)$ can be extended to an analytic function $f^*$ on $\mathcal{E}_{\rho} = E_{\rho_1} \times E_{\rho_2} \times E_{\rho_3}$ with $\rho_i>1$, where $E_\rho$ denotes the Bernstein ellipse, a closed ellipse with foci at $\pm 1$ and the sum of major and minor semi-axes equal to $\rho$.
Then the Chebyshev interpolant $\tilde{f}$ constructed above satisfies for $n=n_1=n_2=n_3$ the error bound
\begin{equation*}
     \norm{f- \tilde{f}}_{\infty} \leq 2^{2.5} \sqrt{3}   \rho_{\min}^{-n} \bra{1-\rho_{\min}^{-2}}^{\revision{-1.5}} \umax{z \in \mathcal{E}^{\rho}} \abs{f^*(z)},
\end{equation*}
where $\norm{\cdot}_\infty$ denotes the uniform norm on $[-1,1]^3$ and $\rho_{\min} = \min (\rho_1,\rho_2,\rho_3)$.
\end{theorem}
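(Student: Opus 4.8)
The plan is to reduce the three–dimensional estimate to the classical one–dimensional Chebyshev interpolation bound and then to exploit the tensor–product structure of the interpolation operator. First I would recall the only genuinely analytic input: for a univariate $g \in C([-1,1])$ that extends analytically to the Bernstein ellipse $E_\rho$, the degree $n-1$ Chebyshev interpolant $I_n g$ obeys an estimate of the form $\norm{g - I_n g}_\infty \le C(\rho)\, \rho^{-n}\, \umax{w \in E_\rho}\abs{g(w)}$, where $C(\rho)$ arises from summing the geometric coefficient bound $\abs{a_k}\le 2\rho^{-k}\max_{E_\rho}\abs{g}$ over the truncated and aliased tail $k\ge n$ (see \cite{Trefethen13}). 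Everything else is bookkeeping around this bound.

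Next, observe that the tensorized interpolant factorizes as $\tilde f = P_1 P_2 P_3 f$, where $P_\alpha$ denotes univariate Chebyshev interpolation acting in the $\alpha$-th variable only. Since the $P_\alpha$ act on different variables they commute, and I would split the error by the telescoping identity
\begin{equation*}
f - P_1P_2P_3 f = \bra{I-P_1}f + P_1\bra{I-P_2}f + P_1P_2\bra{I-P_3}f,
\end{equation*}
which isolates, in each summand, the interpolation error in a single direction while the remaining directions carry already–interpolated (polynomial) data. Setting $M \defeq \umax{z\in\mathcal{E}_\rho}\abs{f^*(z)}$, for $\bra{I-P_1}f$ I would fix $(y,z)$ and note that the section $x\mapsto f^*(x,y,z)$ is analytic on $E_{\rho_1}$ and bounded by $M$, so the univariate bound yields a factor $\rho_1^{-n}\le\rho_{\min}^{-n}$ uniformly in $(y,z)$; the same applies to the other two directions with $\rho_2,\rho_3$. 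Collecting the three contributions and bounding every $\rho_\alpha^{-n}$ by $\rho_{\min}^{-n}$ and each $\rho$-dependent prefactor uniformly gives the claimed shape $C\,\rho_{\min}^{-n}M$, with the $\sqrt 3$ and the exponent $3/2$ emerging from combining the three directional terms.

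The main obstacle is the stability of the \emph{composed} operators $P_1$ and $P_1P_2$ appearing in the second and third summands: a naive bound $\norm{P_\alpha g}_\infty \le \Lambda_n \norm{g}_\infty$ through the Lebesgue constant $\Lambda_n = O(\log n)$ would spoil the $n$-independent constant stated in the theorem. To avoid this I would never leave the class of analytic functions: applying $P_2$ in the $y$-variable produces a function still analytic in $x$ on $E_{\rho_1}$ (it is a fixed linear combination of the analytic sections $f^*(\cdot,y_j,z)$), so one can estimate its maximum over $E_{\rho_1}$ directly and re-apply the univariate bound in the next variable without ever invoking $\Lambda_n$.

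Equivalently, and perhaps more transparently, I would bypass the telescoping altogether and bound the error directly through the tensorized Chebyshev coefficients $\abs{a_{klm}}\le 2^3 M\,\rho_1^{-k}\rho_2^{-l}\rho_3^{-m}$, summing $\abs{a_{klm}}$ (with bounded factors accounting for truncation and aliasing in each direction) over the tail index set on which at least one index is $\ge n$, via a union bound over the three directions. This produces exactly three geometric sums of the stated type and the explicit constant, again with no logarithmic factor, which is why I expect the coefficient–sum route to be the cleanest way to match the precise prefactor in the claimed inequality.
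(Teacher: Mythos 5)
First, a structural point: the paper does not prove this theorem at all --- it is imported verbatim from \cite[Lemma 7.3.3]{Sauter11}, so there is no in-paper proof to compare against. Your proposal must therefore be judged on its own merits and against the standard literature arguments. Your overall strategy is viable, and your second route (tensorized Chebyshev coefficient bounds plus aliasing, summed over the tail set where at least one index exceeds $n$) is indeed the style of argument behind results of this type and the only one of your two routes that can plausibly reproduce the stated constant. However, there are two genuine soft spots.

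The first concerns the stability step in your telescoping route. As literally written, it fails to avoid the Lebesgue constant: for real $y$, $P_2 f(x,y,z) = \sum_j \ell_j(y) f^*(x,y_j,z)$, so estimating "its maximum over $E_{\rho_1}$ directly" from this fixed linear combination gives $\max_{x \in E_{\rho_1}} \abs{P_2 f(x,y,z)} \le \bra{\sum_j \abs{\ell_j(y)}} M$, and $\max_y \sum_j \abs{\ell_j(y)}$ is precisely $\Lambda_n$ --- the factor you set out to avoid. The correct move is to bound not $P_2 f$ but the error $(I-P_2)f$ at complexified $x$: for each fixed $x \in E_{\rho_1}$ and real $z$, the section $y \mapsto f^*(x,y,z)$ is analytic on $E_{\rho_2}$ and bounded by $M$, so the univariate error bound (which holds for complex-valued functions) gives $\max_{x \in E_{\rho_1}} \abs{[(I-P_2)f](x,y,z)} \le C(\rho_2)\, \rho_2^{-n} M$ uniformly in real $(y,z)$. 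One then controls $P_1$ applied to $h = (I-P_2)f$ via $\norm{P_1 h}_\infty \le \norm{h}_\infty + \norm{h - P_1 h}_\infty \le \bra{1 + C(\rho_1)\rho_1^{-n}} \max_{x\in E_{\rho_1}} \abs{h}$, which costs a factor close to $1$ rather than $\Lambda_n$. With this correction (and its two-fold analogue for $P_1 P_2 (I-P_3)f$) the telescoping argument closes; without it, the logarithmic factor you were worried about re-enters exactly where you claimed it would not.

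The second gap is that the claimed constant is never derived, only gestured at. A plain triangle inequality over the three directional terms produces a factor $3$ and denominators of the type $1-\rho^{-1}$, not $\sqrt{3}$ and a $3/2$ power; those arise from an $\ell^2$-type treatment of the coefficient tails (note $\sum_{k \ge n} \rho^{-2k} = \rho^{-2n}/(1-\rho^{-2})$, and the union bound over three directions enters under a square root), i.e.\ from your second route carried out quantitatively, which you do not do. Incidentally, matching the inequality exactly as printed is impossible anyway: the factor $\bra{1-\rho_{\min}^{-2}}^{1.5}$ in the paper's statement tends to $0$ as $\rho_{\min} \to 1$, which would make the bound \emph{better} for barely-analytic functions; it is evidently a typo for $\bra{1-\rho_{\min}^{-2}}^{-1.5}$, consistent with the $\ell^2$ tail sums just described.
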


\sRevision{\textit{Remark.} In principle, approximation~\eqref{eq:ChebyshevInterpolantForm} has the desired format~\eqref{eq:tuckerapprox}. However, the size of the core tensor is linked to the polynomial degree $(n_1-1,n_2-1,n_3-1)$ of $\tilde{f}$. In the following, we introduce low-rank approximations, for which the size of the core tensor can be reduced.}

\subsection{Tucker Approximations}
A Tucker approximation of multilinear rank $(r_1,r_2,r_3)$  for a tensor $\mathcal{T} \in \R^{n_1 \times n_2 \times n_3}$ takes the form
\begin{equation*}%\label{eq:TuckerInitialDefinition}
    \mathcal{T} \approx \hat{\mathcal{T}} = \mathcal{C} \times_1 U \times_2 V \times_3 W,
\end{equation*}
where $\mathcal{C} \in \R^{r_1\times r_2 \times r_3}$ is called core tensor and $U \in \R^{n_1 \times r_1}$, $V \in \R^{n_2 \times r_2}$, $W \in \R^{n_3 \times r_3}$ are called factor matrices.
If $r_{\sRevision{\ell}} \ll n_{\sRevision{\ell}}$, the required storage is reduced from $n_1n_2n_3$ for $\mathcal{T}$ to $r_1r_2r_3+r_1n_1+r_2n_2+r_3n_3$ for $\hat{\mathcal{T}}$.

\subsection{Combining the Tucker Approximation and Chebyshev Interpolation} \label{sec:CombiningTuckerChebyshev}
Let $\hat{\T} = \mathcal{C} \times_1 U \times_2 V \times_3 W$ be a Tucker approximation of the tensor $\T$ obtained from evaluating $f$ in Chebyshev points.
Inserted into~\eqref{eq:ChebyshevInterpolantForm}, we now consider an approximation of the form
\begin{equation}\label{eq:ExplicitHatF}
    \hat{f}(x,y,z) = \sum_{i=1}^{n_1} \sum_{j=1}^{n_2} \sum_{k=1}^{n_3} \hat{\mathcal{A}}_{ijk} T_i(x) T_j(y) T_k(z),
\end{equation}
where the interpolation coefficients $\hat{\mathcal{A}}$ are computed from $\hat{\mathcal{T}}$ as in Equation~\eqref{eq:MappingTtoA}
\begin{align*} %\label{eq:LowRankInterpolationCoeffients}
    \hat{\mathcal{A}} &= \hat{\T} \times_1 F^{(1)} \times_2 F^{(2)} \times_3 F^{(3)} \\
    &= \mathcal{C} \times_1 F^{(1)}U \times_2 F^{(2)}V \times_3 F^{(3)}W.
\end{align*}
Note that the application of $F^{({\sRevision{\ell}})}$ is the mapping from function evaluations to interpolation coefficients in the context of univariate Chebyshev interpolation~\cite{Battles04, Mason02}. By interpreting the values stored in the columns of $U$ as function evaluations at Chebyshev points, we can define columnwise Chebyshev interpolant\sRevision{s} $\sRevision{u_j(x) = \sum_{i=1}^{n_1} (F^{(1)}U)_{ij} T_i(x)}$, \sRevision{$j = 1,\dots,r_1$}. \sRevision{Analogous interpolation based on $F^{(2)}V,F^{(3)}W$}, allows us to rewrite the approximation~\eqref{eq:ExplicitHatF} as
\begin{equation} \label{eq:FunctionalTuckerFormat}
    \hat{f}(x,y,z) = \sRevision{\sum_{i=1}^{r_1} \sum_{j=1}^{r_2} \sum_{k=1}^{r_3} \mathcal{C}_{ijk} u_i(x) v_j(y) w_k(z).}
\end{equation}
The goal of this paper is to compute approximations of this form. \revision{The algorithms presented in Sections~\ref{sec:Chebfun3} and~\ref{sec:Chebfun3F} internally compute the underlying $\hat{\T}$.}

\subsection{Low-Rank Approximation Error} 

The following lemma allows us to distinguish the interpolation error\sRevision{, which can be bounded using Theorem~\ref{thm:3DChebyshevInterpolationErrorAnalyticDecay},} from the low-rank approximation error in the approximation~\eqref{eq:FunctionalTuckerFormat}.
\begin{lemma} \label{lem:ErrorBoundF-FHat}
Consider the Chebyshev interpolation $\tilde{f}$ defined in~\eqref{eq:ChebyshevInterpolantForm} and let $\hat{\T}$ be an approximation of the involved function evaluation tensor $\T \in \R^{n_1\times n_2\times n_3}$. Then the approximation $\hat{f}$ defined in~\eqref{eq:FunctionalTuckerFormat} satisfies
\begin{align}\label{eq:ErrorSplitting}
\norm{f-\hat{f}}_{\infty} \leq& \norm{f-\tilde{f}}_{\infty} + {\bra{\frac{2}{\pi}\log(n_1\sRevision{-1})+1} \bra{\frac{2}{\pi}\log(n_2\sRevision{-1})+1} \bra{\frac{2}{\pi}\log(n_3\sRevision{-1})+1}} \norm{\T - \hat{\T}}_{\infty},
\end{align} 
\sRevision{where $\norm{\cdot}_\infty$ denotes the uniform norm on $[-1,1]^3$ for functions, and the maximum norm on $\R^{n_1\times n_2 \times n_3}$ for tensors.}
\end{lemma}
\begin{proof}
By applying the triangle inequality we obtain
\begin{align*}
\norm{f-\hat{f}}_{\infty}  \leq& \norm{f-\tilde{f}}_{\infty} + \norm{\tilde{f}-\hat{f}}_{\infty}.
\end{align*} 
\sRevision{The function $g =\tilde{f}-\hat{f}$ is the unique polynomial of degree $(n_1-1,n_2-1,n_3-1)$ satisfying $g(x_i^{(1)},x_j^{(2)},x_k^{(3)}) = (\T-\hat{\T})_{ijk}$ for $1\leq i \leq n_1,1\leq j \leq n_2, 1 \leq k \leq n_3$.
For univariate polynomial interpolation the Lebesgue constant $\Lambda_n$ bounds the ratio of uniform norm of the interpolant and the maximum absolute value at the $n+1$ interpolation nodes. For univariate Chebyshev interpolation we have $\Lambda_{n} \leq (2/\pi) \log({n})+1$~\cite{Trefethen13}. This generalizes to tensorized interpolation via the product of the Lebesgue constants for univariate interpolation~\cite{Mason80},
\[ 
\norm{\tilde f - \hat{ f}}_\infty = \norm{g}_ \infty \leq \Lambda_{n_1-1}\Lambda_{n_2-1}\Lambda_{n_3-1}\norm{\T- \hat \T}_\infty.
\]
}
\end{proof}

Lemma~\ref{lem:ErrorBoundF-FHat} states that $\hat{f}$ is nearly as accurate as $\tilde{f}$ when the error bound~\eqref{eq:ErrorSplitting} is not dominated by $\norm{\T-\hat{\T}}_\infty$.
The low-rank approximation $\hat{f}$ can be stored more efficiently than $\tilde{f}$ when $r_{\sRevision{\ell}} \ll n_{\sRevision{\ell}}$.
In the next section, we provide some insight into an example that features $r_{\sRevision{\ell}} \ll n_{\sRevision{\ell}}$.

\subsection{When the Low-Rank Approximation is More Accurate}
\label{sec:RankDegreeExample}
We consider the function 
\begin{equation*}
f_\eps(x,y,z) = \frac{1}{x+y+z+3+\eps}
\end{equation*} 
on $[-1,1]^3$ with parameter $\eps>0$.
Let $\tau \geq 0$.

In this section, we show that a Chebyshev interpolation $\tilde f_\eps$ satisfying $\norm{\tilde{f}_\eps-f_\eps}_\infty \leq \tau$ for a prescribed error bound $\tau$ requires polynomial degrees $n_{\sRevision{\ell}} = \mathcal{O}(1/\log(1+\sqrt{\eps}))$. However, one can achieve $\norm{\T-\hat{\T}}_\infty \leq \tau$ with multilinear ranks $r_{\sRevision{\ell}} \leq \mathcal{O(\abs{\log(\eps))}}$, \revision{which grows much slower than $\mathcal{O}(1/\log(1+\sqrt{\eps})) \approx \mathcal{O}(\eps^{-1/2})$ for $\eps \to 0$}. Therefore, for small values of $\eps$ the required polynomial degree \revision{$(n_1,n_2,n_3)$} is much higher than the required multilinear rank \revision{$(r_1,r_2,r_3)$}.
In this situation, $\hat{f}_\eps$ can achieve almost the same accuracy as $\tilde{f}_\eps$, but with significantly less storage.

\subsubsection*{Polynomial Degree}
For the degree $(n_1,n_2,n_3)$ Chebyshev interpolant~$\tilde{f}_\eps$ we require $\norm{f_\eps-\tilde{f}_\eps}_\infty \leq \tau$, which is equivalent to $\norm{\eps(f_\eps-\tilde{f}_\eps)}_\infty \leq \eps\tau$.
By Theorem~\ref{thm:3DChebyshevInterpolationErrorAnalyticDecay},
\begin{equation*}
\norm{\eps(f_\eps-\tilde{f}_\eps)}_\infty \leq \mathcal{O}(\rho^{-n}_{\min} \cdot \umax{\revision{(\sRevision{x,y},z)} \in \mathcal{E}_{\rho}} |\eps f_\eps^*(\revision{x,y,z})|).
\end{equation*}
We set $\rho_1 = \rho_2 = \rho_3 = 1 + \eps/6 + \sqrt{(1+\eps/6)^2 -1}$ and extend $\eps f_\eps$ analytically to  $\eps  f_\eps^*(x,y,z) = \eps \bra{x+y+z+3+\eps}^{-1}$ on $\mathcal{E}_{\rho}$.
By construction $\umax{\revision{(x,y,z)} \in \mathcal{E}_{\rho}} |\eps f_\eps^*(x,y,z)| = 2$ \revision{is assumed for $x=y=z=-1-\eps/6$, where $|x+y+z+3+\eps|$ is minimized}.
Hence, we can choose $n_1 = n_2 = n_3 = \mathcal{O}(1/\log(1+\sqrt{\eps}))$ to obtain the desired accuracy.
Although this is only an upper bound for the polynomial degree required, numerical experiments reported below indicate that it is tight.

\subsubsection*{Multilinear Rank} 

An a priori approximation with exponential sums is used to obtain a bound on the multilinear rank for a tensor containing function values of $f_\eps$; see~\cite{Hackbusch19}. Given $R> 1$ and $r\in \mathbb N$, Braess and Hackbusch~\cite{Braess05} showed that there exist coefficients $a_i$ and $b_i$ such that 
\begin{equation} \label{eq:ExpSumBestApproxBound}
\Big| \frac1x - \sum_{i=1}^r a_i \exp(-b_i x) \Big| \leq 16 \exp \bra{- \frac{r \pi^2}{\log(8R)}}, \quad \forall x\in[1,R].
\end{equation}
Trivially, we have $\eps f_\eps(x,y,z) = 1/\omega$ for the substitution $\omega = (x+y+z+3+\eps)/\eps$ with $\omega \in [1, 1 + 6/\eps]$.
Applying~\eqref{eq:ExpSumBestApproxBound} yields \revision{that there exist $a_i$ and $b_i$ such that}
$\abs{1/\omega - \revision{\sum_{i=1}^r a_i \exp(-b_i} \omega)} \leq \tau \eps$ or, equivalently,
\begin{equation} \label{eq:ExpSumError3D}
\bigg\lVert
f_\eps(x,y,z) - \underbrace{\sum_{i=1}^r \frac{a_i}{\eps} \cdot \exp\bra{-\frac{b_i}{\eps}x} \cdot \exp\bra{-\frac{b_i}{\eps}y} \cdot \exp\bra{-\frac{b_i}{\eps}z} \cdot \exp\bra{-\frac{b_i}{\eps}(3+\eps)}}_{=:g_\eps(x,y,z)}
\bigg\rVert_\infty \leq \tau
\end{equation}
for every $x,y,z\in [-1,1]$ when
\begin{equation*}
r \geq \frac{-\log\bra{8\bra{1+\frac{6}{\eps}}}\log\bra{\frac{\tau}{16}}}{\pi^2} = \mathcal{O}(\abs{\log(\eps)}).
\end{equation*}
The approximation $g_\eps$ in~\eqref{eq:ExpSumError3D} has multilinear rank $(r,r,r)$. 
In turn, the tensor $\hat{\T}_{ijk}=g_\eps(x_i^{(1)},x_j^{(2)},x_k^{(3)})$ has multilinear rank at most $(r,r,r)$ and satisfies $\norm{\T-\hat{\T}}_\infty \leq \tau$.

\subsubsection*{Comparison}
In Figure~\ref{fig:RankDegStudy}, we estimate the maximal polynomial degree 
required to compute a Chebyshev interpolant with accuracy $\tau=10^{-10}$ for selected fibers of $f_\eps$, which is a lower bound for the required polynomial degrees $n_{{\sRevision{\ell}}}$.
It perfectly matches the asymptotic behavior of the derived upper bound $\mathcal{O}(1/\log(1+\sqrt{\eps}))$.
In Figure~\ref{fig:RankDegStudy}, we also plot the multilinear ranks from the truncated Higher Order Singular Value Decomposition (HOSVD)~\cite{Lathauwer00} with tolerance $\tau$ applied to the tensor containing the evaluation of $f_\eps$ on a $150\times150\times150$ Chebyshev grid. This estimate serves as a lower bound for the multilinear rank required to approximate $f_\eps$. Due to the limited grid size, this estimate does not fully match the asymptotic behavior $\abs{\log(\eps)}$, but nonetheless it clearly reflects that the multilinear ranks can be much smaller than the polynomial degrees, as predicted by
$\abs{\log(\eps)} \ll 1/\log(1+\sqrt{\eps})$, for sufficiently small $\eps$.

\begin{figure}[!ht]
\centering
\includegraphics[width=0.4\textwidth]{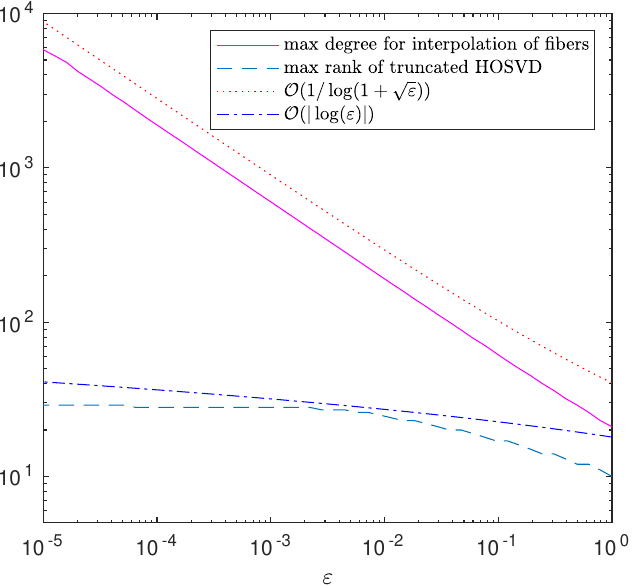}
\caption{\revision{Comparison of the} theoretical upper bounds $\mathcal{O}(\abs{\log(\eps)})$ for the multilinear rank and $\mathcal{O}(1/\log(1+\sqrt{\eps}))$ for the polynomial degree for varying $\eps$\revision{, the} maximal polynomial degree, which is used by Chebfun to approximate selected functions fibers of $f_\eps$ up to accuracy $10^{-10}$\revision{, and the} maximal multilinear rank of the truncated HOSVD with tolerance $10^{-10}$ of a sample tensor on a $150\times150\times150$ Chebyshev grid. The constants in the bounds are chosen to result in curves close to the data.
}
\label{fig:RankDegStudy}
\end{figure}

\section{Existing Algorithm: Chebfun3}\label{sec:Chebfun3}

In this section, we recall how an approximation of the \revision{form~\eqref{eq:FunctionalTuckerFormat}} is computed in Chebfun3~\cite{Hashemi17}. 
As discussed in Section~\ref{sec:RankDegreeExample}, there are often situations in which the multilinear rank of $\hat{f}$ is much smaller than the polynomial degree. Chebfun3 benefits from such a situation by first using a coarse sample tensor $\T_c$ to identify the fibers needed for the low-rank approximation. 
This allows to construct the actual approximation from a finer sample tensor $\T_f$ by only evaluating these fibers instead of the whole tensor. 

\revision{Chebfun3} consists of three phases: preparation of the approximation by identifying fibers for a so called block term decomposition~\cite{Lathauwer08b} of $\T_c$, refinement of the fibers, conversion and compression of the refined block term decomposition into Tucker format~\eqref{eq:FunctionalTuckerFormat}.

\subsection{Phase~1: Block Term Decomposition} 
In Chebfun3, $\T_c \in \R^{n_1^{(c)}\times n_2^{(c)} \times n_3^{(c)}}$  is initially obtained by sampling $f$ on a $17\times17\times17$ grid of Chebyshev points.
A block term decomposition of $\T_c$ is obtained by applying ACA~\cite{Bebendorf11} (see Algorithm~\ref{alg:ACA}) recursively.
In the first step, ACA is applied to a matricization of $\T_c$, say, the mode-$1$ matricization $\T_c^{\set{1}}$. This results in index sets $I,J$ such that
\begin{align}\label{eq:InitialACACF3}
\T_c^{\set{1}} \approx \T_c^{\set{1}}(:,J)\bra{\T_c^{\set{1}}(I,J)}^{-1}\T_c^{\set{1}}(I,:),
\end{align}
where $\T_c^{\set{1}}(:,J)$ contains mode-$1$ fibers of $\T_c$ and $\T_c^{\set{1}}(I,:)$ contains mode-$(2,3)$ slices of $\T_c$.
For each $i \in I$, such a slice $\T_c^{\set{1}}(i,:)$ is reshaped into a matrix $S_i = \T_c(i,:,:) \in \R^{n_2^{(c)}\times n_3^{(c)}}$ and, in the second step, approximated by again applying ACA:
\begin{equation} \label{eq:RecACACF3}
S_i \approx S_i(:,L_i)\bra{S_i(K_i,L_i)}^{-1}S_i(K_i,:),
\end{equation}
where $S_i(:,L_i)$ and $S_i(K_i,:)$ contain mode-$2$ and mode-$3$  fibers of $\T_c$, respectively.
Combining \eqref{eq:InitialACACF3} and \eqref{eq:RecACACF3} yields the approximation 
\begin{equation}\label{eq:BlockTermDecomposition}
    \T_c^{\set{1}} \approx \T_c^{\set{1}}(:,J)\bra{\T_c^{\set{1}}(I,J)}^{-1}
    \begin{pmatrix}
    \mathsf{vec}(S_1(:,L_1)\bra{S_1(K_1,L_1)}^{-1}S_1(K_1,:)) \\
    \mathsf{vec}(S_2(:,L_2)\bra{S_2(K_2,L_2)}^{-1}S_2(K_1,:)) \\
    \vdots
    \end{pmatrix},
\end{equation}
where $\mathsf{vec}$ denotes vectorization. Reshaping this approximation into a tensor can be viewed as a block term decomposition in the sense of~\cite[Definition 2.2.]{Lathauwer08b}.

If the ratios of $\abs{I}/n_1^{(c)}$, $\abs{K_i}/n_2^{(c)}$ and $\abs{L_i}/n_3^{(c)}$ are larger than the heuristic threshold $(2\sqrt{2})^{-1}$ the coarse grid resolution $(n_1^{(c)},n_2^{(c)},n_3^{(c)})$ is deemed insufficient to identify fibers.
If this is the case, $n_{\sRevision{\ell}}^{(c)}$ is increased to $\big\lfloor\sqrt{2}^{\floor{2\log_2(n_{{\sRevision{\ell}}}^{(c)}) + 1}}\big\rfloor + 1$ and Phase~1 is repeated.

\begin{algorithm}
\caption{ACA}\label{alg:ACA} 
\begin{algorithmic}[1]
\State \textbf{Input:} matrix $M$, tolerance $\eps$
\State $I$ = [], $J$ = []
\While $\max \abs{M} \geq \eps$
\State $(i,j) = \uargmax{(i,j)} \abs{M(i,j)}$
\State $I = [I,i]$, $J = [J,j]$
\State $M = M - M(:,j)M(i,:)/M(i,j)$
\EndWhile
\State \textbf{Output:} index sets $I$ and $J$ s.t. $M \approx M(:,J)M(I,J)^{-1}M(I,:)$
\end{algorithmic}
\end{algorithm} 

\subsection{Phase~2: Refinement} \label{sec:curPhase2}

The block term decomposition~\eqref{eq:BlockTermDecomposition} is composed of fibers of $\T_c$. 
Such a fiber $\T_c(:,j,k)$ corresponds to the evaluation of a univariate function $f(\cdot,y,z)$ for certain fixed $y,z$.
Chebfun contains a heuristic to decide whether the function values in $\T_c(:,j,k)$ suffice to yield an accurate interpolation of $f(\cdot,y,z)$~\cite{Aurentz17}. If this is not the case, the grid is refined.

In Chebfun3 this heuristic is applied to all fibers contained in~\eqref{eq:BlockTermDecomposition} in order to determine the size $n_1^{(f)} \times n_2^{(f)} \times n_3^{(f)}$, initially set to $n_1^{(c)} \times  n_2^{(c)} \times  n_3^{(c)}$, of the finer sample tensor $\T_c$. For each ${\sRevision{\ell}} \in \{1,2,3\}$, the size $n_{\sRevision{\ell}}$ is repeatedly increased by setting $n_{{\sRevision{\ell}}}^{(f)} := 2 n_{{\sRevision{\ell}}}^{(f)}-1$, which leads to nested Chebyshev points, until the heuristic considers the resolution sufficient for all mode-${\sRevision{\ell}}$ fibers.
Replacing all fibers in~\eqref{eq:BlockTermDecomposition} by their refined counterparts yields an approximation of the tensor $\T_f$, which contains evaluations of $f$ on a $n_1^{(f)}\times n_2^{(f)} \times n_3^{(f)}$ Chebyshev grid. Note that $\T_f$ might be very large and is never computed explicitly.

\subsection{Phase~3: Compression} %\label{sec:curPhase3}

In the third phase of the Chebfun3 constructor, the refined block term decomposition is converted and compressed to the desired Tucker format~\eqref{eq:FunctionalTuckerFormat}, where the interpolants $u_i(x)$ are stored as Chebfun objects~\cite{Battles04}; see~\cite{Hashemi17} for details. 
Lemma~\ref{lem:ErrorBoundF-FHat} guarantees a good approximation \revision{$\hat{f}$} when the polynomial degrees $n_1^{(f)}, n_2^{(f)}, n_3^{(f)}$ are sufficiently large and when $\T_f$ is well approximated by the \revision{underlying} Tucker approximation \revision{$\hat \T$}. 
Neither of these properties can be guaranteed in Phases 1 and 2 alone.
Therefore in a final step, Chebfun3 verifies the accuracy by comparing $f$ and the approximation $\hat f$ at Halton points~\cite{Niederreiter92}.
If the estimated error is too large, the whole algorithm is restarted on a finer coarse grid from Phase~1.

\subsection{Disadvantages}\label{sec:Chebfun3Redundant}

The Chebfun3 algorithm often requires unnecessarily many function evaluations. As we will illustrate in the following, this is due to redundancy among the mode-$2$ and mode-$3$ fibers.
For this purpose we collect all (refined) mode-$2$ fibers $S_i(:,L_i)$ in the block term decomposition~\eqref{eq:BlockTermDecomposition} into the columns of a big matrix $V^{\mathsf{BTD}}_m = \begin{bmatrix}S_1(:,L_1) & \cdots & S_{m}(:,L_{m})\end{bmatrix}$, where $m$ is the number of steps of the outer ACA~\eqref{eq:InitialACACF3}.
As will be demonstrated with an example below, matrix $V^{\mathsf{BTD}}_m$ is often observed to have low numerical rank, which in turn allows to represent its column space by much fewer columns, that is, much fewer mode-$2$ fibers. As the accuracy of the column space determines the accuracy of the Tucker decomposition after the compression, this implies that the other mode-$2$ fibers in $V^{\mathsf{BTD}}_m$ are redundant.

Let us now consider the block term decomposition\footnote{Note that the accuracy verification in Phase~3 fails once for this function. Here we only consider to block term decomposition obtained after restarting the procedure.}~\eqref{eq:BlockTermDecomposition} for the function 
\begin{equation*}%\label{eq:TestFunction}
f(x,y,z) = \frac{1}{1+25\sqrt{x^2+y^2+z^2}}.
\end{equation*}
In Figure~\ref{fig:RedundantFibers} the numerical rank and the number of columns of $V^{\mathsf{BTD}}_m$ are compared. For $m=10$ the approximation of the slices $S_{i_1}$ to $S_{i_{10}}$ leads to a total of $153$ mode-$2$ fibers, the sum of the corresponding red and blue bars in Figure~\ref{fig:RedundantFibers}. In contrast, their numerical rank (blue bar) is only $19$. Thus, the red bar can be interpreted as number of redundant mode-$2$ fibers. This happens since nearby slices tend to be similar.
\begin{figure}[ht]
\centering
\includegraphics[width=0.4\textwidth]{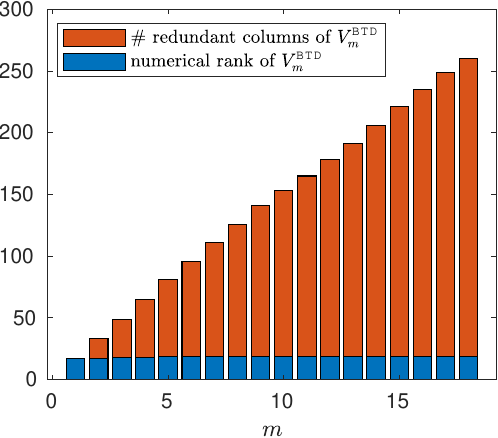}\\
\caption{Numerical rank and number of redundant columns (= total number of columns - numerical rank) of the matrix $V^{\mathsf{BTD}}_m$, whose columns are given by the refined version of the mode-$2$ fibers determined after $m$ steps of the outer ACA~\eqref{eq:InitialACACF3} in Chebfun3.}
\label{fig:RedundantFibers}
\end{figure}
The total block term decomposition contains $18$ slices and is compressed into a Tucker decomposition with multilinear rank $(17,19,19)$. It contains $242$ redundant fibers, the refinement requires $192$ function evaluations for each of them. Note that the asymmetry in the rank of the Tucker decomposition is caused by the asymmetry of the block term decomposition.

Another disadvantage is that Chebfun3 always requires the full evaluation of $\T_c$ in Phase~1.
This becomes expensive when a large size $n_1^{(c)} \times n_2^{(c)} \times n_3^{(c)}$ is needed in order to properly identify suitable fibers.

\section{Novel Algorithm: Chebfun3F}\label{sec:Chebfun3F}
In this section, we describe our novel algorithm Chebfun3F to compute an approximation of the form~\eqref{eq:FunctionalTuckerFormat}. The goal of Chebfun3F is to the avoid the redundant function evaluations observed in Chebfun3. While the structure of Chebfun3F is similar to Chebfun3, consisting of 3~phases to identify/refine fibers and compute a Tucker decomposition, there is a major difference in Phase~1. Instead of proceeding via slices, we directly identify mode-${\sRevision{\ell}}$ fibers of $\T_c$ for building factor matrices. The core tensor is constructed in Phase~3.

\subsection{Phase~1: Fiber Indices and Factor Matrices}
As in Chebfun3, the coarse tensor $\T_c \in \R^{n_1^{(c)}\times n_2^{(c)} \times n_3^{(c)}}$  is initially defined to contain the function values of $f$ on a $17\times17\times17$ Chebyshev grid.
% \revision{$\T_c(i,j,k) = f(x_i^{(1)},x_j^{(2)},x_k^{(3)})$, $i,j,k=1,\ldots,17$.}
We seek to compute \sRevision{full rank} factor matrices $U_c \in \R^{n_1^{(c)} \times r_1}$, $V_c \in \R^{n_2^{(c)}\times r_2}$ and $W_c \in \R^{n_3^{(c)}\times r_3}$ such that the orthogonal projection of $\T_c$ onto the span of the factor matrices is an accurate approximation of $\revision{\T}_c$, i.e.
\begin{equation}\label{eq:CoarseTensorApproximation}
\T_c \approx \T_c \times_1 U_c (U_c^T U_c)^{-1} U_c^T \times_2 V_c (V_c^T V_c)^{-1} V_c^T \times_3 W_c (W_c^T W_c)^{-1} W_c^T.
\end{equation}
Additionally, we require that the columns in $U_c,V_c,W_c$ contain fibers of $\T_c$.

In the existing literature, algorithms to compute such factor matrices include the Higher Order Interpolatory Decomposition~\cite{Saibaba16}, which is based on a rank revealing QR~decomposition, and the Fiber Sampling Tensor Decomposition~\cite{Caiafa10}, which is a generalization of the CUR~decomposition. 
We propose a novel algorithm, which in contrast to the existing algorithms does not require the evaluation of the full tensor $\T_c$.
We follow the ideas of TT-cross~\cite{Oseledets10, Savostyanov11} and its variants such as the Schur-Cross3D~\cite{Rakhuba15} and the ALS-cross~\cite{Dolgov19b}. 

Initially, we randomly choose index sets $\tI,\tJ,\tK$ \revision{ by partitioning $\{1,\dots,17\}$ into $6$ subsets and sampling one index from each subset for each index set.}
In the first step, we apply Algorithm~\ref{alg:ACA} to $(\T_c(:,\tJ,\tK))^{\set{1}}$.
Note that this needs drawing only $36\cdot n_1^{(c)}$ values of the function $f$, in contrast to $n_1^{(c)} n_2^{(c)} n_3^{(c)}$ values in the whole tensor $\T_c$.
The selected $r_1$ columns serve as a first candidate for the factor matrix $U_c$. The index set $\tI$ is set to the row indices selected by Algorithm~\ref{alg:ACA} (see Figure~\ref{fig:IndexVisualization}).
We use the updated index set and apply Algorithm~\ref{alg:ACA} to $(\T_c(\tI,:\tK))^{\set{\revision{2}}}$ analogously, which yields $V_c$ and an updated $\tJ$. From $(\T_c(\tI,\tJ,:))^{\set{\revision{3}}}$ we obtain $W_c$ and $\revision{\tK}$.
We repeat this process in an alternating fashion with the updated index sets, which leads to potentially improved factor matrices.
Following the ideas of Chebfun3, we check after each iteration whether the ratios $r_1/n_1^{(c)}$, $r_2/n_2^{(c)}$ and $r_3/n_3^{(c)}$ surpass the heuristic threshold $(2\sqrt{2})^{-1}$. 
If this is the case, we increase the size of the coarse tensor $n_{\sRevision{\ell}}^{(c)}$ to $\big\lfloor\sqrt{2}^{\floor{2\log_2(n_{{\sRevision{\ell}}}^{(c)}) + 1}}\big\rfloor + 1$ and restart the whole process by reinitializing $\tI, \tJ, \tK$ with $r_1,r_2,r_3$ random indices respectively.
\begin{figure}
    \centering
    \includegraphics{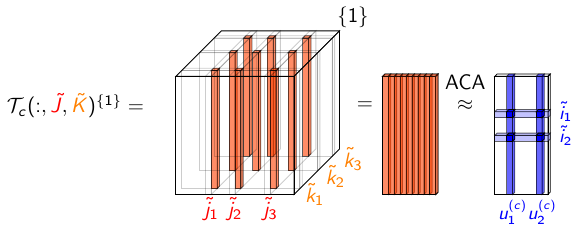}
    \caption{Visualization of applying ACA (Algorithm~\ref{alg:ACA}) to a matricization of a subtensor.}
    \label{fig:IndexVisualization}
\end{figure} 

It is not clear a priori how many iterations are needed to attain an approximation~\eqref{eq:CoarseTensorApproximation} that yields a Tucker approximation~\eqref{eq:FunctionalTuckerFormat} which passes the accuracy verification in Phase~3.
In numerical experiments, it has usually proven to be sufficient to stop after the second iteration, during which the coarse grid has not been refined, or when $\abs{\tI}\leq1$, $\abs{\tJ}\leq1$ or $\abs{\tK}\leq1$. This is formalized in Algorithm~\ref{alg:Phase1}. \sRevision{Note that $U_c,V_c,W_c$ are full rank by construction, since Algorithm~\ref{alg:ACA} stops based on the tolerance $\eps \geq 0$.} In many cases, we found that the numbers of columns in the factor matrices are equal to the multilinear rank of the truncated HOSVD~\cite{Lathauwer00} of $\T_c$ with the same tolerance. 

\begin{algorithm}[ht]
\caption{Factor Matrix Computation}\label{alg:Phase1}
\begin{algorithmic}[1]
\State \textbf{Input:} $f$, $(n_1^{(c)}, n_2^{(c)}, n_3^{(c)})$, $(r_1,r_2,r_3)$ 
\State Let $\revision{\T_c}(i,j,k)$ = $f(x_i^{(1)},x_j^{(2)},x_k^{(3)})$ be a function that evaluates the values of $f$ on a $n_1^{(c)}\times n_2^{(c)}\times n_3^{(c)}$ Chebyshev grid on demand.
\State initialize $\tJ,\tK$ with $r_2,r_3$ randomly chosen indices in $\big\{1,\dots,n_1^{(c)}\big\},\big\{1,\dots,n_2^{(c)}\big\}$
\For \text{iterations} = 1:2
\State compute ACA of $\T_c(:,\tilde{J},\tilde{K})^{\set{1}}$ $\rightarrow$ $U_c \in \R^{n_1^{(c)} \times r_1}$ = selected columns, $\tI$ = selected row indices
\State compute ACA of $\T_c(\tI,:,\tK)^{\set{2}}$ $\rightarrow$ $ V_c \in \R^{n_2^{(c)} \times r_2}$ = selected columns, $\tJ$ = selected row indices
\State compute ACA of $\T_c(\tI,\tJ,:)^{\set{3}}$ $\rightarrow$ $W_c \in \R^{n_3^{(c)} \times r_3}$ = selected columns, $\tK$ = selected row indices
\State \textbf{if} the multilinear ranks get too large $\rightarrow$ adjust the size of $(n_1^{(c)},n_2^{(c)},n_3^{(c)})$ and go to line~2
\State \textbf{if} $r_1\leq 1$ or $r_2\leq 1$ or $r_3\leq 1$ $\rightarrow$ go to line~10
\EndFor
\State \textbf{Output:} $U_c,V_c,W_c$, $(n_1^{(c)}, n_2^{(c)}, n_3^{(c)})$, $(r_1,r_2,r_3)$
\end{algorithmic}
\end{algorithm}

\subsection{Phase~2: Refinement of the Factors}
In Phase~2, the fibers in $U_c,V_c,W_c$ are refined using Chebfun's heuristic~\cite{Aurentz17} as in Chebfun3 (see Section~\ref{sec:curPhase2}). 
This leads to \sRevision{full rank} factor matrices $U_f \in \R^{n_1^{(f)}\times r_1}$, $V_f\in \R^{n_2^{(f)}\times r_2}$ and $W_f \in \R^{n_3^{(f)}\times r_3}$  containing the refined fibers of $\T_f$,  corresponding to the evaluations of $f$ on a $n_1^{(f)} \times n_2^{(f)} \times n_3^{(f)}$ Chebyshev grid,
\revision{$\T_f(i,j,k) = f(x_i^{(1)},x_j^{(2)},x_k^{(3)})$, $i=1,\ldots,n_1^{(f)}$, $j=1,\ldots,n_2^{(f)}$, $k=1,\ldots,n_3^{(f)}$.}
This phase needs only $\mathcal{O}(\sum_{{\sRevision{\ell}}}n_{{\sRevision{\ell}}}^{(f)}r_{{\sRevision{\ell}}})$ evaluations of $f$.

\subsection{Phase~3: Reconstruction of the Core Tensor} %\label{sec:Phase3New}
In the final Phase of Chebfun3F, we compute a core tensor $\hat{\mathcal{C}}$ to yield an approximation $\T_f \approx \hat{\mathcal{C}} \times_1 U_f \times_2 V_f \times_3 W_f$.

In principle, the best approximation (with respect to the Frobenius norm) for fixed factor matrices $U_f,V_f,W_f$ is obtained by orthogonal projections \cite{Lathauwer00}.
Such an approach comes with the major disadvantage that the full evaluation of $\T_f$ is required.
This can be circumvented by instead using oblique projections\revision{. The oblique projection onto the span of $U_f$ is defined as} $U_f (\Phi_I^T U_f)^{-1} \Phi_I^T$, where  $\Phi_I^T U_{\revision{f}} = U_{\revision{f}}(I,:)$ for an index set $I$ \revision{which contains $r_1$ indices selected from} $\{1,\dots,n_1^{(f)}\}$. \revision{Analogous o}blique projections in all three modes yield
\begin{equation*}
    \T_f \approx \hat{\T} =  \underbrace{\bra{\revision{\underbrace{(\T_f \times_1 \Phi_I^T \times_2 \Phi_J^T \times_3 \Phi_K^T)}_{=\T_f(I,J,K)}} \times_1 (\Phi_I^T U_f)^{-1} \times_2 (\Phi_J^T V_f)^{-1} \times_3 (\Phi_K^T W_f)^{-1}}}_{= \hat{\mathcal{C}}} \times_1 U_f  \times_2 V_f  \times_3 W_f,
\end{equation*}
for index sets $I,J,K$. The choice of $I,J,K$ is crucial for the approximation quality and will be discussed later on.
\revision{Note that the computation of the $\T_f$ only requires $r_1r_2r_3$ additional evaluations of $f$.}
From $\hat{\T}$ we construct the approximation~\eqref{eq:FunctionalTuckerFormat} as described in Section~\ref{sec:CombiningTuckerChebyshev}. 

\revision{
Let $Q_U,Q_V,Q_W$ denote the orthogonal matrices in the QR decompositions of $U_f, V_f, W_f$. Note that $U_f (\Phi_I^T U_f)^{-1} = {Q_U(\Phi_I^T Q_U)^{-1}}$ and $(\T_f(I,J,K) \times_1 \Phi_I^T U_f)^{-1}) \times_1 U_f = \T_f(I,J,K) \times_1 U_f (\Phi_I^T U_f)^{-1}$. In Chebfun3F, we treat $\hat{\T}$ as Tucker decomposition of the form}
\begin{align}\label{eq:CF3FThat}
\hat{\T}
=& {\T_f(I,J,K)} \times_1 {Q_U(\Phi_I^T Q_U)^{-1}} \times_2 {Q_V(\Phi_J^T Q_V)^{-1}} \times_3 {Q_W (\Phi_K^T Q_W)^{-1}}
\end{align}
\revision{to avoid the potentially ill-conditioned matrices $(\Phi_I^T U_f)^{-1},(\Phi_J^T V_f)^{-1},(\Phi_K^T W_f)^{-1}$. } \sRevision{Note that we have $\hat{T}(I,J,K) = T(I,J,K)$ by construction.}

The following lemma plays a critical role in guiding the choice of indices $I,J,K$.
\begin{lemma}[{\cite[Lemma 7.3]{Chaturantabut10}}]\label{lem:DEIMerror}
Let $M \in \R^{n\times m}$, $n\geq m$, have orthonormal columns.
Consider an index set $I\subset \set{1,\dots,n}$ of cardinality $m$ such that $\Phi_I^T M$ is invertible. Then the oblique projection $M (\Phi_I^T M)^{-1} \Phi_I^T$ satisfies
\begin{equation*}
\norm{x- M (\Phi_I^T M)^{-1} \Phi_I^T x}_2 \leq \norm{(\Phi_I^TM)^{-1}}_2\cdot\norm{(I-MM^T)x}_2, \quad \forall x \in \R^{n},
\end{equation*}
where $\norm{\cdot}_2$ denotes the matrix $2$-norm.
\end{lemma}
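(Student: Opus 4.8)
The plan is to decompose the error vector into two orthogonal pieces and bound each separately, exploiting the fact that $M$ has orthonormal columns. Write $P = M(\Phi_I^T M)^{-1}\Phi_I^T$ for the oblique projector onto the column space of $M$, and let $\Pi = MM^T$ denote the orthogonal projector onto the same column space. The key algebraic observation I would establish first is that $P$ acts as the identity on the range of $M$: for any $y$ in the column space we can write $y = Mc$, and then $Py = M(\Phi_I^T M)^{-1}\Phi_I^T M c = Mc = y$. In particular $P\Pi = \Pi$, which means $P$ is a (non-orthogonal) projector whose range is exactly $\mathrm{range}(M)$.

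The central step is to insert the orthogonal projector $\Pi$ and use the identity just noted. Starting from $x - Px$, I would write
\begin{equation*}
x - Px = (x - \Pi x) + (\Pi x - Px) = (I - \Pi)x + (P\Pi x - Px) = (I-\Pi)x - P(I-\Pi)x,
\end{equation*}
where the last equality uses $P\Pi = \Pi$ applied as $\Pi x = P\Pi x$, so that $\Pi x - Px = P\Pi x - Px = -P(I-\Pi)x$. Hence
\begin{equation*}
x - Px = (I - P)(I-\Pi)x.
\end{equation*}
This is the crucial reduction: the error equals $(I-P)$ applied to the orthogonal residual $(I-\Pi)x = (I - MM^T)x$, whose norm appears on the right-hand side of the claimed bound.

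It then remains to bound $\norm{I-P}_2$ by $\norm{(\Phi_I^T M)^{-1}}_2$. Here I would use the structure of $P$ together with orthonormality of the columns of $M$. A clean route: since $(I-P)$ annihilates the range of $M$ and $(I-\Pi)x$ already lies in the orthogonal complement, one can restrict attention to vectors $w$ with $\Pi w = 0$ and bound $\norm{(I-P)w}_2 = \norm{w - Pw}_2$. Because $\norm{w}_2^2 = \norm{Pw}_2^2 + \norm{(I-P)w}_2^2$ fails in general for oblique $P$, I instead bound directly: $\norm{Pw}_2 = \norm{M(\Phi_I^T M)^{-1}\Phi_I^T w}_2 = \norm{(\Phi_I^T M)^{-1}\Phi_I^T w}_2 \le \norm{(\Phi_I^T M)^{-1}}_2 \norm{\Phi_I^T w}_2 \le \norm{(\Phi_I^T M)^{-1}}_2\norm{w}_2$, using that $M$ has orthonormal columns (so $\norm{Mz}_2=\norm{z}_2$) and that $\Phi_I^T$ is a selection of rows (so $\norm{\Phi_I^T w}_2 \le \norm{w}_2$). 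Combining $\norm{w - Pw}_2 \le \norm{w}_2 + \norm{Pw}_2$ is too lossy, so the sharper argument uses $w = (I-\Pi)x$ and the fact that $\Phi_I^T(I-\Pi)x = \Phi_I^T x - \Phi_I^T M M^T x$; substituting $x - Px = (I-\Pi)x - M(\Phi_I^T M)^{-1}\Phi_I^T(I-\Pi)x$ and noting the first term is orthogonal to the range of $M$ while the second lies in it, Pythagoras now applies to give $\norm{x-Px}_2^2 = \norm{(I-\Pi)x}_2^2 + \norm{(\Phi_I^T M)^{-1}\Phi_I^T(I-\Pi)x}_2^2$.

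The main obstacle, and the step I would be most careful with, is the final norm estimate: the naive triangle-inequality bound on $\norm{I-P}_2$ overshoots, and one must exploit the orthogonal splitting so that the factor $\norm{(\Phi_I^T M)^{-1}}_2$ multiplies only the orthogonal residual $\norm{(I-MM^T)x}_2$ rather than $\norm{x}_2$. Concretely, from the Pythagorean identity I would bound the second summand by $\norm{(\Phi_I^T M)^{-1}}_2^2\,\norm{\Phi_I^T(I-\Pi)x}_2^2 \le \norm{(\Phi_I^T M)^{-1}}_2^2\,\norm{(I-\Pi)x}_2^2$, and then check that the stated bound follows once the dominant term $\norm{(\Phi_I^T M)^{-1}}_2 \ge 1$ (which holds since $\Phi_I^T M$ is a submatrix of a matrix with orthonormal columns) absorbs the lower-order contribution. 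Since this is a verbatim citation of \cite[Lemma 7.3]{Chaturantabut10}, I expect the reference proof to follow exactly this orthogonal-decomposition strategy.
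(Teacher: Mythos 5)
Your reduction $x - Px = (I-P)(I-\Pi)x$ with $P = M(\Phi_I^T M)^{-1}\Phi_I^T$ and $\Pi = MM^T$ is correct, and so is the Pythagorean identity
\begin{equation*}
\norm{x-Px}_2^2 = \norm{(I-\Pi)x}_2^2 + \norm{(\Phi_I^T M)^{-1}\Phi_I^T(I-\Pi)x}_2^2,
\end{equation*}
but your final estimate does not close the argument. Writing $c = \norm{(\Phi_I^TM)^{-1}}_2$, bounding the second summand by $c^2\norm{(I-\Pi)x}_2^2$ gives in total $\norm{x-Px}_2 \leq \sqrt{1+c^2}\,\norm{(I-\Pi)x}_2$, and $\sqrt{1+c^2} > c$ for \emph{every} $c$; the observation $c \geq 1$ cannot ``absorb the lower-order contribution,'' since no value of $c$ makes $1+c^2 \leq c^2$ true. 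As written you have therefore proved the lemma only with the weaker constant $\sqrt{1+c^2}$ (at worst $\sqrt{2}\,c$), not the stated one. For context: the paper itself gives no proof of this lemma, it is quoted verbatim from the cited reference, so the comparison here is against the standard DEIM argument rather than an in-paper proof.

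The gap is repairable inside your own framework, but it requires evaluating the oblique term exactly rather than via the lossy chain $\norm{A^{-1}\Phi_I^T w}_2 \leq \norm{A^{-1}}_2\norm{\Phi_I^Tw}_2 \leq c\norm{w}_2$. Set $A = \Phi_I^TM$ and $C = A^{-1}\Phi_I^T(I-\Pi)$. Using $(I-\Pi)^2 = I-\Pi$, $\Phi_I^T\Phi_I = I_m$ and $\Phi_I^T M = A$, one computes
\begin{equation*}
CC^T = A^{-1}\Phi_I^T(I-\Pi)\Phi_I A^{-T} = A^{-1}\bra{I_m - AA^T}A^{-T} = A^{-1}A^{-T} - I_m,
\end{equation*}
so that $\norm{C}_2^2 = \norm{A^{-1}}_2^2 - 1 = c^2 - 1$ exactly. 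Feeding this into your Pythagorean identity yields $\norm{x-Px}_2^2 \leq \bra{1 + (c^2-1)}\norm{(I-\Pi)x}_2^2 = c^2\norm{(I-\Pi)x}_2^2$, which is precisely the claim, and it also explains why the naive bound had to fail: the quantity $\norm{C}_2$ is strictly smaller than $c$, by exactly the amount needed. Alternatively, the standard route (the one in the cited reference) avoids Pythagoras altogether: from $x - Px = (I-P)(I-\Pi)x$ one invokes the classical identity $\norm{I-P}_2 = \norm{P}_2$, valid for any projector $P$ with $P \neq 0$ and $P \neq I$, together with $\norm{P}_2 \leq \norm{M}_2 \norm{A^{-1}}_2 \norm{\Phi_I^T}_2 = c$; your orthogonal-splitting strategy is a legitimate alternative, but only with the sharpened norm computation above.
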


Lemma~\ref{lem:DEIMerror} exhibits the critical role played by the quantity $\norm{(\Phi_I^T Q_U)^{-1}}_2 \ge 1$ for oblique projections. In \revision{Chebfun3F}, we use the \textit{discrete empirical interpolation method} (DEIM) \cite{Chaturantabut09}, presented in Algorithm~\ref{alg:DEIM}, to compute the index sets $I,J,K$ given $Q_U,Q_V,Q_W$. In practice, these index sets usually yield good approximations as $\norm{(\Phi_I
^TQ_U)^{-1}}_2$ tends to be small; see also Section~\ref{sec:Chebfun3Ferror}.

\begin{algorithm}[ht]
\caption{Discrete Empirical Interpolation Method}\label{alg:DEIM}
\begin{algorithmic}[1]
\State \textbf{Input:} orthonormal matrix $M \in \mathbb{R}^{n\times m}$  
\State $I = [\mathsf{argmax}\ |M(:,1)|]$
\For $k = 2,\dots,m$
\State $c = M(I,1:k-1) \setminus M(I,k)$
\State $r = M(:,k) - M(:,1:k-1)c$
\State $I = [I,\mathsf{argmax}\ |r|]$
\EndFor
\State \textbf{Output:} index set $I$
\end{algorithmic}
\end{algorithm}

\subsection{Chebfun3F Algorithm}
% Given the tensor approximation $\hat{\T}$,
\revision{Having computed the Tucker factors $U_f$,  $V_f$ and $W_f$ in Phase 2, and the core $\hat{\mathcal{C}}$ in Phase 3,}
we obtain $\hat{f}$ by interpolating the factor matrices as described in Section~\ref{sec:CombiningTuckerChebyshev},
\revision{Eq.~\eqref{eq:FunctionalTuckerFormat}.}
Using Chebfun~\cite{Battles04}, the columns in $U_f,V_f,W_f$ are transformed into Chebyshev interpolants.
Following Chebfun3, we perform an accuracy verification for $\hat{f}$ by comparing its evaluations at Halton points to the original $f$. If the difference of the evaluations is too large, we restart the whole algorithm \revision{up to ten times} using a finer coarse grid. Additionally, we modify the ranks such that if $r_{{\sRevision{\ell}}_1} \leq 2$ we set $r_{{\sRevision{\ell}}_2} = \max(6,2r_{{\sRevision{\ell}}_2})$ and $r_{{\sRevision{\ell}}_1} = 3$ for ${\sRevision{\ell}}_1 \neq {\sRevision{\ell}}_2$, \revision{and after the forth restart we set $r_{\sRevision{\ell}}=2r_{\sRevision{\ell}}$ for ${\sRevision{\ell}}=1,2,3$.} This ensures that the multilinear ranks can grow in Phase~1. The overall Chebfun3F algorithm is formalized in Algorithm~\ref{alg:Chebfun3F}.

\begin{algorithm}
\caption{Chebfun3F}\label{alg:Chebfun3F}
\begin{algorithmic}[1]
\algdef{SE}[SUBALG]{Indent}{EndIndent}{}{\algorithmicend\ }%
\algtext*{Indent}
\algtext*{EndIndent}
\State \textbf{Input:} \revision{A function $f(x,y,z)$, a procedure $\T_f(i,j,k) = f(x_i^{(1)},x_j^{(2)},x_k^{(3)})$ that evaluates the values of $f$ on a $n_1^{(f)}\times n_2^{(f)}\times n_3^{(f)}$ Chebyshev grid on demand, a stopping tolerance $\eps>0$.}
\State \textbf{Initialization:} $(n_1^{(c)},n_c^{(c)},n_3^{(c)}) = (17,17,17)$, $(r_1,r_2,r_3) = (6,6,6)$
\State \textbf{Phase~1:}
\Indent
 \State apply Algorithm~\ref{alg:Phase1} to compute the factor matrices $U_c,V_c,W_c$ and to update $(n_1^{(c)}, n_2^{(c)}, n_3^{(c)})$, $(r_1,r_2,r_3)$
 \EndIndent
\State \textbf{Phase~2:}
\Indent
\State $(n_1^{(f)}, n_2^{(f)}, n_3^{(f)}) = (n_1^{(c)}, n_2^{(c)}, n_3^{(c)})$, \quad $U_f=U_c$, $V_f=V_c$, $W_f=W_c$
\While \text{Chebfun} heuristic in~\cite{Aurentz17} to decide if $U_f$ contains a sufficient number of entries is not satisfied
\State $n_1^{(f)} = 2n_1^{(f)}-1$, \quad refine $U_f$ such that its columns have length $n_1^{(f)}$
\EndWhile
\State proceed analogously to obtain $V_f,W_f$
\EndIndent
\State \textbf{Phase~3:}
\Indent
\State $[Q_{U}, \revision{\sim}] = \text{qr}({U}_f)$, \quad ${I} = \text{DEIM}(Q_{U})$, \quad $U  = Q_{U} \cdot Q_{U}({I},:)^{-1}$
\State $[Q_{V}, \revision{\sim}] = \text{qr}({V}_f)$, \quad ${J} = \text{DEIM}(Q_{V})$, \quad $V  = Q_{V} \cdot Q_{V}({J},:)^{-1}$
\State $[Q_{W}, \revision{\sim}] = \text{qr}({W}_f)$, \quad ${K} = \text{DEIM}(Q_{W})$, \quad $W  = Q_{W} \cdot Q_{W}({K},:)^{-1}$
\State compute the Chebyshev interpolants $\sRevision{u_i(x),v_j(y),w_k(z)}$ \sRevision{based on $U,V,W$} (see Section~\ref{sec:CombiningTuckerChebyshev})
\State $\mathcal{C} = T_f\bra{{I},{J},{K}}$,\quad $\hat{f}(x,y,z) \sRevision{ = \sum_{i=1}^{r_1} \sum_{j=1}^{r_2} \sum_{k=1}^{r_3} \mathcal{C}_{ijk} u_i(x) v_j(y) w_k(z)}$
\If $\abs{f(x,y,z)-\revision{\hat{f}(x,y,z)}} > 10\eps$ at Halton points $(x,y,z)\in [-1,1]^3$
\State  modify the ranks $r_{\sRevision{\ell}}$ if they are too small
\State restart from Phase~1 with $n_{{\sRevision{\ell}}}^{(c)} =\big\lfloor\sqrt{2}^{\floor{2\log_2(n_{{\sRevision{\ell}}}^{(c)}) + 1}}\big\rfloor + 1$
\EndIf
\EndIndent
\State \textbf{Output:} approximation $\hat{f}(x,y,z) \sRevision{ = \sum_{i=1}^{r_1} \sum_{j=1}^{r_2} \sum_{k=1}^{r_3} \mathcal{C}_{ijk} u_i(x) v_j(y) w_k(z)}$
\end{algorithmic}
\end{algorithm}  

\textit{Remark.} 
In Chebfun3 upper bounds for the multilinear rank, polynomial degree and grid sizes are prescribed.
The tolerances in the accuracy verification and in the ACA are initially set close to machine precision or provided by the user.
Tolerance issues are avoided by relaxing these tolerances adaptively based on the computed function evaluations.
In Chebfun3F, we handle these technicalities in the same manner.

\subsubsection{Existence of a Quasi-Optimal Chebfun3F Approximation}\label{sec:Chebfun3Ferror}

Due to the many heuristic ingredients in the Chebfun3F algorithm, it is difficult to analyze the convergence of the whole algorithm. Instead, we discuss the existence and error analysis of a specific Chebfun3F reconstruction. Lemma~\ref{lem:ErrorBoundF-FHat} shows how we can bound the approximation error depending on $\norm{\T-\hat{\T}}_\infty$. Theorem~\ref{thm:bestApproxError} \revision{}provides a bound for this error \revision{for a tensor approximation of the format~\eqref{eq:CF3FThat}} with specifically chosen fibers and index sets. The best approximation in the format will be at least as good. Although Chebfun3F is not guaranteed to return \revision{these specific fibers and index sets}, it is hoped that its error is not too far away.

\begin{theorem} \label{thm:bestApproxError}
Consider $\T \in \R^{n_1\times n_2 \times n_3}$ of multilinear rank at least $(r_1,r_2,r_3)$.
Let $Q_U,Q_V,Q_W$ denote orthonormal bases of $r_1,r_2,r_3$ selected mode-$1,2,3$ fibers of $\T$ respectively.
Given index sets $I,J,K$ we consider a Tucker decomposition of the form
\begin{equation*}
    \hat{\T} =  \T \times_1 Q_U (\Phi_I^T Q_U)^{-1} \Phi_I^T \times_2 Q_V (\Phi_J^T Q_V)^{-1} \Phi_J^T \times_3 Q_W (\Phi_K^T Q_W)^{-1} \Phi_K^T,
\end{equation*}
where $\Phi_I^T U = U(I,:)$. There exists a choice of fibers and indices such that 
\begin{align*}
    \norm{\T-\hat{\T}}_{\infty} \leq& \left(  \sqrt{\sRevision{q(r_1,n_1)\cdot}(r_1+1)} + \sqrt{\sRevision{q(r_1,n_1)\cdot q(r_2,n_2)\cdot}(r_2+1)} \right. \\ 
&+ \left.  \sqrt{\sRevision{q(r_1,n_1)\cdot q(r_2,n_2)\cdot q(r_3,n_3)\cdot} (r_3+1) } \right)  \norm{\T-\hat{\T}_{\mathsf{best}}}_F, 
\end{align*}
where $\norm{\cdot}_F$ denotes the Frobenius norm,  \sRevision{$q(r,n) = \sqrt{1 + r(n-r) }$,}  and $\hat{\T}_{\mathsf{best}}$ is the best Tucker approximation of $\revision{\T}$ with multilinear rank at most $(r_1,r_2,r_3)$.
\end{theorem}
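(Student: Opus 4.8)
The plan is to control the three successive oblique projections one mode at a time. Writing $P_U = Q_U(\Phi_I^T Q_U)^{-1}\Phi_I^T$, $P_V = Q_V(\Phi_J^T Q_V)^{-1}\Phi_J^T$ and $P_W = Q_W(\Phi_K^T Q_W)^{-1}\Phi_K^T$ for the three mode projectors, I would first telescope the global error as
\[
\T - \hat{\T} = \T\times_1(\mathrm{Id}-P_U) + (\T\times_1 P_U)\times_2(\mathrm{Id}-P_V) + (\T\times_1 P_U\times_2 P_V)\times_3(\mathrm{Id}-P_W),
\]
and apply the triangle inequality in $\norm{\cdot}_\infty$. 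Each of the three resulting terms is an oblique projection residual applied in a single mode to an intermediate tensor, so the whole task reduces to bounding one such residual and then tracking how the already-projected modes enter the later terms.

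For a single mode, say the first, I would pass from the tensor uniform norm to the column $2$-norms of the mode-$1$ matricization: since $\abs{v_i}\le\norm{v}_2$ entrywise, $\norm{\T\times_1(\mathrm{Id}-P_U)}_\infty$ is at most the largest $2$-norm among the columns of $(\mathrm{Id}-P_U)\T^{\set{1}}$. Applying Lemma~\ref{lem:DEIMerror} with $M=Q_U$ to each such column bounds this by $\norm{(\Phi_I^T Q_U)^{-1}}_2\,\norm{(\mathrm{Id}-Q_UQ_U^T)\T^{\set{1}}}_F$, cleanly separating the index-selection factor $\norm{(\Phi_I^T Q_U)^{-1}}_2$ from the orthogonal low-rank residual of the chosen fibers. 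The second and third terms are handled identically, except that the intermediate tensors $\T\times_1 P_U$ and $\T\times_1 P_U\times_2 P_V$ appear; pushing the mode-$1$ (and mode-$2$) multiplications through the mode-$2$ (resp. mode-$3$) matricization as right multiplications by Kronecker factors, and using $\norm{AB}_F\le\norm{A}_F\norm{B}_2$ together with $\norm{P_U}_2\le\norm{(\Phi_I^T Q_U)^{-1}}_2$, produces exactly the extra factors $\norm{(\Phi_I^T Q_U)^{-1}}_2$ and $\norm{(\Phi_J^T Q_V)^{-1}}_2$ that account for the nesting visible in the claimed constant.

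It remains to choose the fibers and the indices so that the two kinds of factors are small. For the index sets I would invoke the maximal-volume principle: for any orthonormal $Q_U$ there is an index set $I$ for which $\Phi_I^T Q_U$ has maximal volume, so every entry of $Q_U(\Phi_I^T Q_U)^{-1}$ has modulus at most $1$; a short computation (the rows indexed by $I$ reproduce the identity, the remaining rows each have $r_1$ entries bounded by $1$) then gives $\norm{(\Phi_I^T Q_U)^{-1}}_2\le\sqrt{1+r_1(n_1-1)}$, and similarly for $J,K$. For the fibers I would invoke a volume-sampling column-subset-selection existence result: there exist $r_1$ mode-$1$ fibers whose orthonormal basis $Q_U$ satisfies $\norm{(\mathrm{Id}-Q_UQ_U^T)\T^{\set{1}}}_F\le\sqrt{r_1+1}\,\norm{\T^{\set{1}}-(\T^{\set{1}})_{r_1}}_F$, where $(\T^{\set{1}})_{r_1}$ is the best rank-$r_1$ matrix approximation. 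Since the mode-$1$ matricization of $\hat{\T}_{\mathsf{best}}$ has rank at most $r_1$, its residual dominates that of the truncated SVD, so $\norm{\T^{\set{1}}-(\T^{\set{1}})_{r_1}}_F\le\norm{\T-\hat{\T}_{\mathsf{best}}}_F$; the rank-at-least-$(r_1,r_2,r_3)$ hypothesis guarantees the relevant submatrices are invertible, so the projectors are well defined. Combining the three bounded terms then reproduces the stated constant.

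The main obstacle I anticipate is not any single estimate but the bookkeeping that makes the nesting of factors come out exactly right: one must verify that the index-selection factors $\norm{(\Phi_I^T Q_U)^{-1}}_2$ and $\norm{(\Phi_J^T Q_V)^{-1}}_2$ accumulate multiplicatively as the already-projected modes are carried through the later matricizations, while the fiber-selection factor $\sqrt{r_\alpha+1}$ enters only once per term. One must also check that the two independent existence choices, namely fibers via volume sampling and indices via maximal volume, made sequentially and separately in each mode, are genuinely compatible, so that a single choice of all six objects realizes the bound simultaneously.
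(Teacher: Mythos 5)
Your proposal is correct and takes essentially the same route as the paper: your telescoped, mode-by-mode application of Lemma~\ref{lem:DEIMerror} is exactly how the paper obtains its inequality~\eqref{eq:T-hatTinTheorem2}, and your two existence ingredients coincide with the paper's citations --- the maximal-volume argument you sketch is precisely the content of \cite[Lemma 2.1]{Goreinov97} (and in fact gives the sharper constant $\sqrt{1+r_1(n_1-r_1)}$), while the volume-sampling fiber selection with factor $\sqrt{r_\alpha+1}$ is \cite[Theorem 8]{Deshpande10}. The only differences are expository: you re-derive the two cited results and spell out the telescoping, the accumulation of the $\norm{P_U}_2$ factors, and the passage from the best rank-$r_\alpha$ matricization error to $\norm{\T-\hat{\T}_{\mathsf{best}}}_F$, all of which the paper compresses into single steps or citations.
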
 

\begin{proof}
Using Frobenius norm properties and Lemma~\ref{lem:DEIMerror}, we obtain
\begin{align}
\norm{\T-\hat{\T}}_\infty \leq& 
\norm{\T-\hat{\T}}_F 
\leq \norm{(\Phi_I^T Q_U)^{-1}}_2 \norm{(I-Q_UQ_U^T) \T^{\set{1}}}_F  \nonumber \\
&+ \norm{(\Phi_I^T Q_U)^{-1}}_2 \norm{(\Phi_J^T Q_V)^{-1}}_2 \norm{(I-Q_VQ_V^T)\T^{\set{2}}}_F \nonumber \\
&+ \norm{(\Phi_I^T Q_U)^{-1}}_2  \norm{(\Phi_J^T Q_V)^{-1}}_2  \norm{(\Phi_K^T Q_W)^{-1}}_2  \norm{(I-Q_WQ_W^T)\T^{\set{3}}}_F.
\label{eq:T-hatTinTheorem2}
\end{align} 
From~\cite[Lemma 2.1]{Goreinov97} it follows that there exists an index set $I$ such that 
\begin{equation}\label{eq:bound1Theorem2}
\norm{(\Phi_I^T \revision{Q_U})^{-1}}_2 \leq \sqrt{1 + r_1(n_1-r_1)}.
\end{equation}
From~\cite[Theorem 8]{Deshpande10} \revision{with the role of rows and columns interchanged} it follows that we can select mode-$1$ fibers \revision{$U$} of $\T$ such that 
\begin{equation} \label{eq:bound2Theorem2}
\norm{(I-Q_UQ_U^T)\T^{\set{1}}}_F = \norm{(I-U(U^TU)^{-1}U^T)\T^{\set{1}}}_F  \leq  \sqrt{r_1+1} \norm{\T-\hat{\T}_{\mathsf{best}}}_F.
\end{equation} 
Analogous bounds hold for $\norm{(\Phi_J^T Q_V)^{-1}}_{2}$, $\norm{(\Phi_K^T Q_W)^{-1}}_{2}$, $\norm{(I-Q_VQ_V^T)\T^{\set{2}}}_F$ and $\norm{(I-Q_WQ_W^T)\T^{\set{3}}}_F$.
Applying the bounds~\eqref{eq:bound1Theorem2} and~\eqref{eq:bound2Theorem2} to the factors in~\eqref{eq:T-hatTinTheorem2} yields the claimed result.
\end{proof}

\textit{Remark.} If one uses orthogonal instead of oblique projections in Phase~3, Corollary~6 in~\cite{Cortinovis19} yields a bound similar to Theorem~\ref{thm:bestApproxError}. 
\revision{Whilst the index sets obtained from DEIM~\cite{Chaturantabut10} yield small errors in practice, their theoretical upper bounds for $\norm{(\Phi_I^T {Q_U})^{-1}}_2$ grow exponentially in $r$. In contrast, the strong rank-revealing QR decomposition~\cite{Gu96} yields an index set for which a bound similar to Inequality~\eqref{eq:bound1Theorem2} is known~\cite[Lemma 2.1]{Drmac18}.}

\revision{\textit{Remark.}  Note that the bound in Theorem~\ref{thm:bestApproxError} is the worst case bound for the optimal choice of index sets. However, this does not present the whole picture as even suboptimal index sets might yield a much better approximation in practice. To quantify the quality of the approximation in practice, we computed Chebfun3F approximations for the functions $f(x,y,z) = \log(1+x^2+y^2+z^2)$, $f(x,y,z) = 1/(1+x^2+y^2+z^2)$, $f(x,y,z) = \exp(xyz)$ and compared $\norm{\T_f-\hat{\T}}_\infty$ and $\norm{\T_f-\T_{\textsc{HOSVD}}}_\infty$, where $\hat{\T} = \mathcal{C}\times_1 U \times_2 V \times_3 W$ and 
$\T_{\textsc{HOSVD}}$ denotes the truncated HOSVD of $\T_f$ with multilinear ranks equal to those of $\hat{\T}$. We observed that both errors differ by at most a factor of $2$. Even though the truncated HOSVD does not focus on $\norm{\cdot}_\infty$, it still can serve as a good proxy. Hence by Lemma~\ref{lem:ErrorBoundF-FHat}, the error in Chebfun3F is comparable to the error obtained from the truncated HOSVD.}

%\revision{\textit{Remark.}  The total error in the function approximation can be bounded using Lemma~\ref{lem:ErrorBoundF-FHat}.
% The stopping tolerance $\eps$ in Alg~.\ref{alg:Chebfun3F} can be seen as an error threshold in the discrete tensor.
% The Chebyshev interpolation error and the Lebesgue constants are taken into account in Line~16 of Alg.~\ref{alg:Chebfun3F}}

\subsubsection{Comparison of Theoretical Cost}
Assume $f$ can be approximated accurately in Tucker format~\eqref{eq:FunctionalTuckerFormat} with multilinear rank $(r,r,r)$ and polynomial degrees $(n,n,n)$, $n\geq r$.
In a highly idealized setting Chebfun3 and Chebfun3F refine the coarse grid in Phase~1 until $n^{(c)}_{{\sRevision{\ell}}} > (2\sqrt{2})r$ and identify fibers on this coarse grid. These fibers are refined until $n^{(f)}_{{\sRevision{\ell}}} \geq n$ and lead to Tucker approximations which pass the accuracy check in Phase~3.
Under these circumstances, both Chebfun3 and Chebfun3F use $\mathcal{O}(r^3)$ function evaluations in Phase~1 (see Section 2.2 in~\cite{Hashemi17}). In total, Chebfun3 requires $\mathcal{O}(nr^2)$ function evaluations~\cite[Proposition 2.1]{Hashemi17}), whereas Chebfun3F only requires $\mathcal{O}(r^3+nr)$, since fewer fibers are refined in Phase~2.
We want to emphasize that, in general, it is not guaranteed that this $n^{(c)}_{{\sRevision{\ell}}}$ suffices to identify fibers leading to an accurate approximation.
\revision{We summarize the breakdown of anticipated costs in Table~\ref{tab:cost-th}.}

\begin{table}[htb]
\centering
\begin{tabular}{c|cc}
          &  Chebfun3                    & Chebfun3F \\ \hline
Phase 1   &  $\mathcal{O}(r^3)$          & $\mathcal{O}(r^3)$       \\
Phase 2   &  $\mathcal{O}(nr^2)$         & $\mathcal{O}(nr)$   \\
Phase 3   &  0                           & $\mathcal{O}(r^3)$       \\ \hline
Total     &  $\mathcal{O}(r^3 + nr^2)$   & $\mathcal{O}(r^3 + nr)$ \\
\end{tabular}
\caption{\revision{Anticipated numbers of functions evaluations in Chebfun3 and Chebfun3F.}}
\label{tab:cost-th}
\end{table}

\section{Numerical Results}\label{sec:NumericalResults}
In this section, we present numerical experiments\footnote{The \revision{MATLAB} code to reproduce these results is available from \texttt{https://github.com/cstroessner/Chebfun3F}.} to compare Chebfun3F and Chebfun3. The main focus lies on the number of function evaluations required to compute the approximation in Tucker format~\eqref{eq:FunctionalTuckerFormat}. Unless mentioned otherwise the tolerance for the ACA and the accuracy check are initially set close to machine precision.

\subsection{Chebfun3 vs. Chebfun3F}
In Section~\ref{sec:Chebfun3Redundant} we illustrated that the function
\begin{equation*}
    f(x,y,z) = \frac{1}{1+25\sqrt{x^2+y^2+z^2}},
\end{equation*}
leads to a lot of redundant fibers in Chebfun3. For this function, for both Chebfun3 and Chebfun3F, the accuracy check at the end of Phase~3 fails and the computation is restarted on a finer coarse grid, which leads to an approximation that passes the test.
Overall $903\,364$ function evaluations are required by Chebfun3, $421\,041$ of them are used before the restart. In comparison, Chebfun3F only needs $222\,546$ function evaluation in total and $100\,496$ before the restart. In the final accuracy check, the estimated error for Chebfun3 is $7.8\cdot10^{-13}$ and $3.6\cdot10^{-13}$ for Chebfun3F, i.e. both approximations achieve around the same accuracy. 

In Figure \ref{fig:EvalsPerPhase}, the function evaluations per phase are juxtaposed.
The figure shows, that Chebfun3 requires a large number of function evaluations in Phase~2. This is caused by the redundant fibers. In Phase~1, Chebfun3F requires fewer function evaluations, since $\T_c$ is not evaluated completely. Whilst Chebfun3 only requires function evaluations in Phase~3 for the accuracy check, Chebfun3F additionally needs to compute the core tensor, which only leads to a small number of function evaluations compared to the other phases.

\begin{figure}[ht]
\centering
\subfloat[Before Restarting]{{\includegraphics[width=0.4\textwidth]{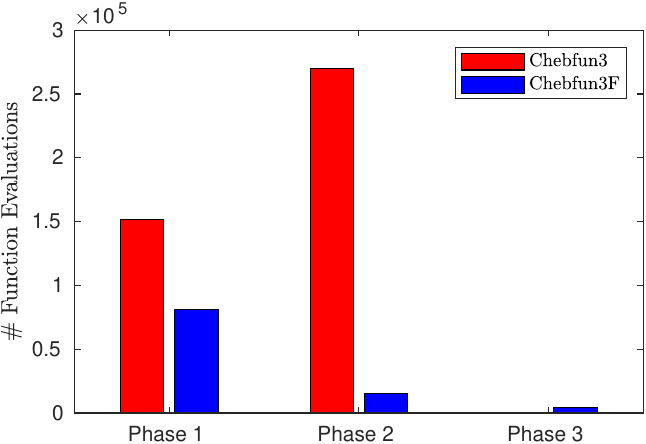}}}%
\qquad
\subfloat[After Restarting]{{\includegraphics[width=0.4\textwidth]{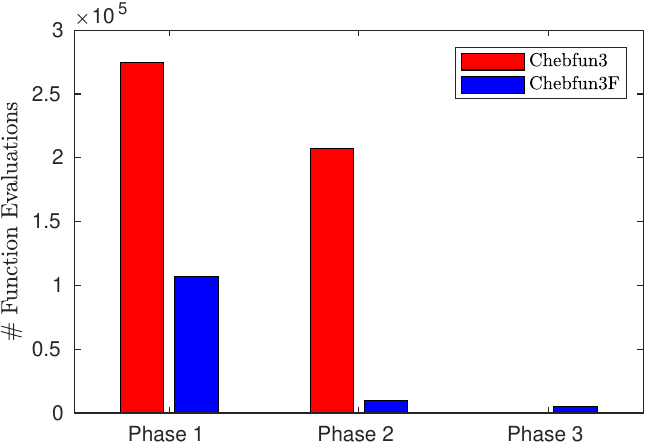}}}%
\caption{Comparison of the function evaluations used by Chebfun3 and Chebfun3F to approximate $f(x,y,z) = (1+25\sqrt{x^2+y^2+z^2})^{-1}$. Both algorithms are restarted due to failing the accuracy check once. Evaluations \revision{}before the restart are depicted in (a), evaluations after the restart in (b). The evaluations are subdivided into phases corresponding to the phases in Section~\ref{sec:Chebfun3} and Section~\ref{sec:Chebfun3F} respectively.}
\label{fig:EvalsPerPhase}
\end{figure}

\revision{\textit{Remark.} The number of function evaluations required by Chebfun3F depends on the random initialization of the index sets $\tilde{I},\tilde{J},\tilde{K}$ in Algorithm~\ref{alg:Phase1}. We computed the Chebfun3F approximation of $f$ for $1\,000$ different random initializations and observed numbers of function evaluations ranging from $213\,391$ to $226\,073$ with mean $221\,802.6$ and variance $4.96\cdot10^6$. Similarly mild fluctuations have been observed for all other functions tested.} 

In Figure~\ref{fig:ExhaustiveExamples}, the required function evaluations are depicted for four different functions. \revision{The corresponding computing times are depicted in Table~\ref{tab:runtime}.}  Again both algorithms lead to approximations of similar accuracy and Chebfun3F requires fewer function evaluations than Chebfun3.
For 
\begin{equation}\label{eq:F5a}
    f_1(x,y,z) = \exp(-\sqrt{(x-1)^2+(y-1)^2+ (z-1)^2})
\end{equation}
Chebfun3 requires the refinement of a huge number of redundant fibers in Phase~2.
The evaluations for 
\begin{equation}\label{eq:F5b} %label is necessary but not called explicitly
f_2(x,y,z) = \revision{[}\cosh\revision{(}3\revision{(}x+y+z\revision{))]^{-2}}
\end{equation}
differ most in Phase~1, where Chebfun3F benefits from not evaluating $\T_c$ completely. No additional refinement is required in Phase~2.
For the function 
\begin{equation}\label{eq:F5c}
f_3(x,y,z) = \frac{10^5}{1+10^5(x^2+y^2+z^2)}.
\end{equation}
Chebfun3F reduces the number of required function evaluations by more than $98\%$ to $1\,603\,693$ from $109\,269\,332$ required by Chebfun3. 

\begin{figure}[!ht]
\centering
\subfloat[]{{\includegraphics[width=0.4\textwidth, height=0.27\textwidth]{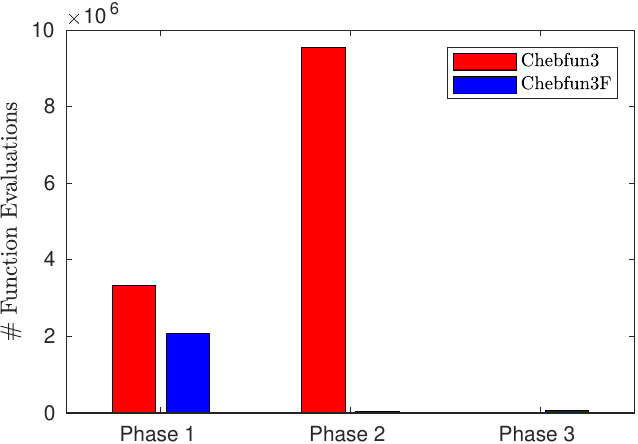}}}%
\qquad
\subfloat[]{{\includegraphics[width=0.4\textwidth, height=0.27\textwidth]{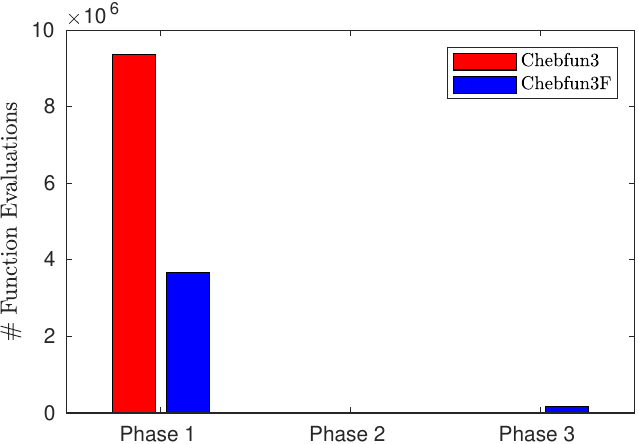}}}%
\qquad
\vspace{0.1cm}
\subfloat[]{{\includegraphics[width=0.4\textwidth, height=0.27\textwidth]{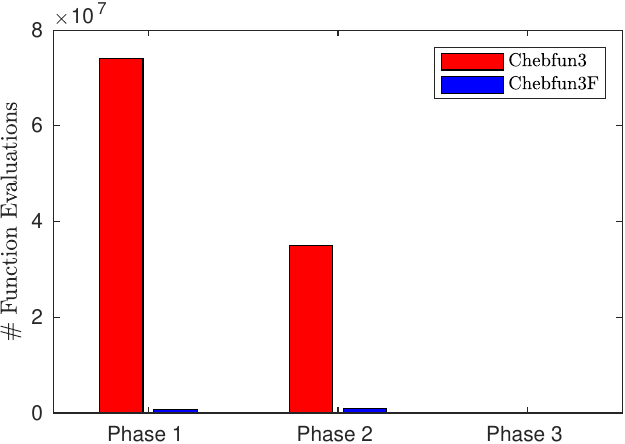}}}%
\qquad
\subfloat[]{{\includegraphics[width=0.4\textwidth, height=0.27\textwidth]{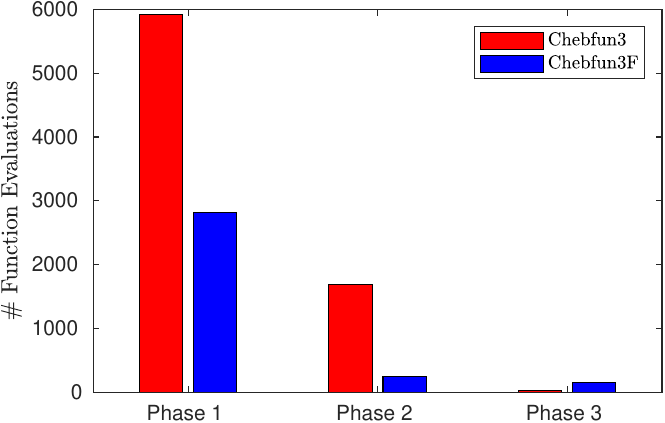}}}%
\caption{Total number of function evaluations per Phase in Chebfun3 and Chebfun3F for the functions: (a)~$f_1$ (b)~$f_2$, (c)~$f_3$ as defined in~\eqref{eq:F5a}-\eqref{eq:F5c} (d) the parametric PDE model~\eqref{eq:ModelPDE1}-\eqref{eq:ModelPDE2} with prescribed tolerance $10^{-9}$.}
\label{fig:ExhaustiveExamples}
\end{figure}

\begin{table}[!ht]
\centering
 \begin{tabular}{|c |c c c c|} 
 \hline
& $f_1$ & $f_2$ & $f_3$ & PDE \\ 
 \hline
 Chebfun3 & $3.44$  & $3.20$ & $15.80$ & $408.62$ \\ 
 \hline
 Chebfun3F & $0.30$ & $0.77$ & $0.56$ & $100.19$ \\
 \hline
\end{tabular}
\caption{\revision{Computing times in MATLAB in seconds for Chebfun3 and Chebfun3F approximations of $f_1,f_2,f_3$ as defined in~\eqref{eq:F5a}-\eqref{eq:F5c} and of the parametric PDE model~\eqref{eq:ModelPDE1}-\eqref{eq:ModelPDE2} with prescribed tolerance $10^{-9}$.}}
\label{tab:runtime}
\end{table}

In certain cases Chebfun3 outperforms Chebfun3F. 
This can happen in degenerated situations. 
For instance, the function $f(x,y,z) = \tanh(5(x+z))\exp(y)$ requires \sRevision{a} Tucker decomposition with rank $r = [71,1,71]$.
In this case, Chebfun3F heavily relies on the heuristic to increase $(r_1,r_2,r_3)$ when restarting and requires $1\,641\,712$ function evaluations compared to $1\,128\,061$ in Chebfun3.
Other functions that are difficult to approximate with Chebfun3F include \revision{(numerically)} locally supported functions, \revision{such as a trivariate normal distribution with very small entries in the covariance matrix,} for which identifying \revision{non-zero} fibers in Phase~1 without fully evaluating $\T_c$ might be difficult. 

\subsubsection*{Application: Uncertainty Quantification}
%UQ application
Algorithms in uncertainty quantification, such as the Metropolis-Hastings method, often require the repeated evaluations of a parameter depended quantity of interest \cite{Stuart10}.
In many applications, the evaluation of this quantity requires the solution of a PDE depending on the parameters.
To speed up computations, the mapping from the parameters to the quantity of interest is often replaced by a surrogate model \cite{Xiu17}.
In the context of models with three parameters (or after a dimension reduction to three parameters~\cite{Constantine16}), Chebfun3/Chebfun3F could be a suitable surrogate.

We consider the parametric elliptic PDE model problem on $\Omega = [-1,1]^2$
\begin{align}
\nabla \revision{\cdot((}(p_1+2)f_1(x,y) + (p_2+2) f_2(x,y) + (p_3+2) f_3(x,y)\revision{)} \nabla u(x,y)\revision{)} &= 1 && (x,y) \in \Omega, \label{eq:ModelPDE1}\\
u(x,y) &= 0 && \in \delta\Omega, \label{eq:ModelPDE2}
\end{align}
with parameters $(p_1,p_2,p_3)  \in [-1,1]^3$ and functions $f_1(x,y) = \cos(x) + \sin(y) + 2$, $f_2(x,y) = \sin(x) + \cos(y) + 2$, and $f_3(x,y) = \cos(x^2+y^2) + 2$.
The quantity of interest is defined as point evaluation $u(0.5,0.5)$.
With Chebfun3F, we only need $3,217$ PDE solves to compute an approximation with prescribed accuracy $10 ^{-9}$, whereas Chebfun3 requires $7,626$ as depicted in Figure~\ref{fig:ExhaustiveExamples}(d).

\subsection{Comparison to Sparse Grids}
Lastly, we study how efficient Chebfun3 approximations are compared to sparse grids \cite{Bungartz04}.
Sparse grids are a method to interpolate functions by projecting them onto a particular space.
This space is obtained by selecting the most beneficial elements from a hierarchical basis under the assumption that the function has bounded mixed second derivatives.
Interpolation based on sparse grids performs particularly well when the norms of the mixed second derivatives of the function are small.
In the following, we use dimension adaptive sparse grids based on a Chebyshev-Gauss-Lobatto grid with polynomial basis functions from the Sparse Grid Interpolation Toolbox~\cite{Klimke08}.

\begin{figure}[ht]
\centering
\subfloat[]{{\includegraphics[width=0.4\textwidth]{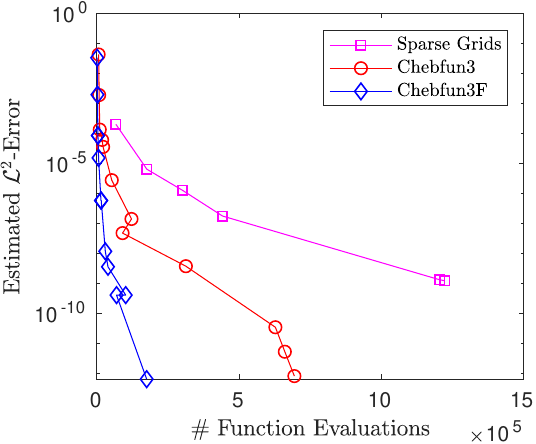}}}%
\qquad
\subfloat[]{{\includegraphics[width=0.4\textwidth]{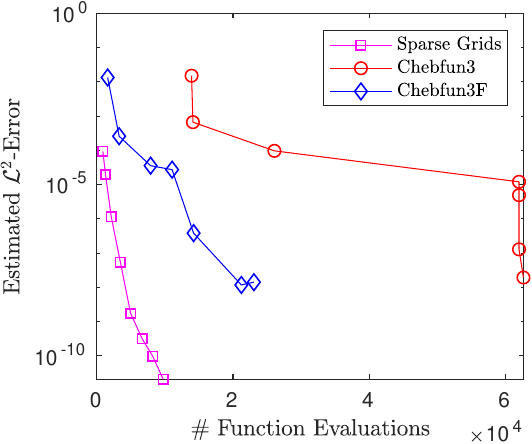}}}%
\qquad
\vspace{0.1cm}
\subfloat[]{{\includegraphics[width=0.4\textwidth]{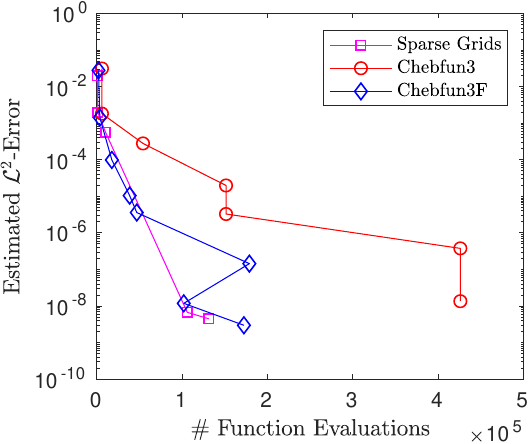}}}
\caption{Comparison of the number of function evaluations required to compute a Chebfun3, Chebfun3F and sparse grid approximation for the functions: (a)~$f(x,y,z) = (1+25\sqrt{x^2+y^2+z^2})^{-1}$, (b)~sum of $10$ Gaussians, (c)~$f_4$ \revision{as defined in}~\eqref{eq:F6c}. The algorithms are initialized with varying tolerances. Their $\mathcal{L}^2$ error is estimated at $1\,000$ sample points.}
\label{fig:SGcomparision}
\end{figure}

We compare how the approximation error decays compared to the number of function evaluations.
Therefore, we prescribe varying tolerances to the algorithms.
In Figure \ref{fig:SGcomparision} the error decay is plotted for sparse grids, Chebfun3 and Chebfun3F.
In (a), the function already studied in section \ref{sec:Chebfun3Redundant} is depicted.
We observe that both Chebfun3F and Chebfun3 require fewer function evaluations than sparse grids to achieve the same accuracy.
The sparse grids perform poorly, since the function is smooth, but the norms of the second mixed derivatives are rather large. 
In contrast, the sum of $10$ Gaussians depicted in (b) is well suited for sparse grids.
In this case sparse grids require fewer function evaluations than Chebfun3F and Chebfun3 for the same accuracy.
In (c) the Chebfun3F and sparse grids perform about equally well for
\begin{equation} \label{eq:F6c}
    f_4(x,y,z) = \log(x+yz+\exp(xyz)+\cos(\sin(\exp(xyz)))). 
\end{equation}
For an arbitrary, black-box function it is not clear a priory whether a sparse grid interpolation or a Tucker decomposition~\eqref{eq:FunctionalTuckerFormat} is the more efficient type of approximation\revision{. However, when the Tucker decomposition is the better approximation format, we can expect that Chebfun3F requires fewer function evaluations compared to Chebfun3.}

\section{Conclusions}
Trivariate functions defined on tensor product domains can be approximated efficiently by combining tensorized Chebyshev interpolation and a low-rank Tucker approximation of the evaluation tensor. In this paper, we presented Chebfun3F to compute such approximations. Our numerical experiments show that Chebfun3F requires fewer function evaluations to compute such an approximation of the same accuracy compared to Chebfun3. Future work could cover how operations can be computed directly on the level of Tucker decompositions. For instance, multiplication can be treated directly on the tensor level~\cite{Kressner17}, whereas the Chebfun3 package relies on constructing a new approximation from point evaluations. We suspect that other operations can be computed in a similar manner.

Finally, let us remark that the extension of the presented algorithms and results to functions depending on more than three variables is trivial. However, the Tucker format is not well suited for the high-order tensors arising from the evaluation of a function in many variables. Other formats, such as the TT format, are better suited for this purpose and will require different construction algorithms.

{
}

\end{document}